\documentclass[11pt, reqno]{amsart}
\usepackage[all,tips]{xy}
\usepackage{latexsym, amsfonts, amsmath, amssymb, mathrsfs, graphicx, xcolor, ytableau, hyperref, float}
\usepackage[justification=centering]{caption}
\allowdisplaybreaks
\usepackage{centernot}
\usepackage{tikz}

\definecolor{red}{rgb}{1,0,0}
\definecolor{magenta}{rgb}{1,0,1}
\definecolor{dartmouthgreen}{rgb}{0.05, 0.5, 0.06}
\definecolor{purple(x11)}{rgb}{0.63,0.36,0.94}
\definecolor{turquoise}{rgb}{0.25, 0.87, 0.81}
\newtheorem{theorem}{Theorem}[section]
\newtheorem{lemma}[theorem]{Lemma}
\newtheorem{proposition}[theorem]{Proposition}

\newtheorem{conjecture}[theorem]{Conjecture}
\newtheorem{definition}[theorem]{Definition}
\newtheorem*{problem}{Open Problem}
\newtheorem*{principle}{Principle}

\theoremstyle{definition}
\newtheorem{remark}[theorem]{Remark}

\newcommand{\Cal}[1]{\ensuremath{\mathcal{#1}}}

\newcommand{\lp}{\left(}
\newcommand{\rp}{\right)}

\usepackage[textheight=8.75in, textwidth=6.75in]{geometry}

\def\C{{\mathbb C}}

\def\N{{\mathbb N}}
\def\Z{{\mathbb Z}}
\def\Q{{\mathbb Q}}
\def\F{{\mathbb F}}

\def\O_K{{\Cal{O}_{K}}}
\def\O_F{{\Cal{O}_{F}}}
\def\N_F{{\Cal{N}_{F/\Q}}}

\newcommand{\cE}{{\mathcal E}}
\newcommand{\ord}{\mathrm{ord}}

\def\O_K{{\Cal{O}_{K}}}
\def\O_F{{\Cal{O}_{F}}}
\def\N_F{{\Cal{N}_{F/\Q}}}

\newcommand{\SL}{\mathrm{SL}}

\DeclareMathOperator{\li}{\mathrm{li}}

\numberwithin{equation}{section}
\numberwithin{theorem}{section}

\title{New Types of Sturm bounds via $p$-adic transfer methods}

\author{William Craig}
\address{Department of Mathematics, United States Naval Academy, 572C Holloway Road
Mail Stop 9E. Annapolis, MD 21402}
\email{wcraig@usna.edu}

\keywords{Sturm bound, quasimodular forms, $q$-series}
\subjclass[2020]{11F33, 11F11, 11F30}

\begin{document} 
	
\begin{abstract} 
    Sturm's theorem states that a modular form with coefficients in $\Z$ or $\Z/m\Z$ can only have an explicitly bounded order of vanishing at infinity. This result is one of the most powerful computational tools in the study of modular forms, and has widespread applications to congruences and other kinds of explicit calculations in mathematics and physics. In this paper, we formulate a new ``$p$-adic transfer method" that lifts Sturm-type bounds from one space to another using exclusively non-geometric inputs. As an application, we transfer the Sturm bounds for classical modular forms to the space of quasimodular forms of level one. These bounds are applicable uniformly for quasimodular forms with coefficients in $\Z$ or $\Z/m\Z$, which extends the non-uniform results for $\Z/m\Z$ only which can be derived from classical theories. We also discuss the potential for future applications to other quasi- and mixed-weight modular objects, and perhaps even entirely non-modular objects.
\end{abstract}
 
\maketitle

\section{Introduction and Statement of results}

Modular forms and their Fourier series are among the most fascinating and enduring objects in mathematics, especially in number theory, in which they play crucial roles in many of the most important problems of the last two centuries. Modular forms arise in many contexts and have many equivalent constructions, often given in terms of special functional equations, functions on elliptic curves, or as sections of line bundles. For our work, we will define a modular form by its Fourier expansion; that is, a {\it modular form} is an element of the algebra $\C[E_4, E_6]$, where $E_{2k}$ for $k \geq 2$ are the {\it Eisenstein series}
\begin{align*}
    E_{2k}(z) = 1 - \dfrac{4k}{B_{2k}} \sum_{n \geq 1} \sigma_{2k-1}(n) q^n,
\end{align*}
where $\sigma_\ell(n) := \sum_{d|n} d^\ell$ are the sum-of-divisors functions, $B_{2k}$ are the even index Bernoulli numbers, and where $q := e^{2\pi i z}$ is a complex variable with $z$ in the upper half plane. In the larger theory of modular forms, these are usually called modular forms for the full modular group $\mathrm{SL}_2(\Z)$, or simply modular forms of level one. The algebra of modular forms is graded by a {\it weight}, defined to be $2k$ for each $E_{2k}$ and extended so that any product $E_4^a E_6^b$ carries weight $4a+6b$. There are many generalizations of modular forms which can be defined, the most immediate of which are the modular forms for congruence subgroups $\Gamma \subset \mathrm{SL}_2(\Z)$, which are constructed from variants of Eisenstein series \cite[Ch. 4]{DiamondShurman}.

All modular forms for $\SL_2(\Z)$ have Fourier expansions
\begin{align*}
    f = \sum_{n \geq 0} a_f(n) q^n,
\end{align*}
which may be expressed, for instance, as linear combinations of convolution sums of divisor sums. Modular forms and their generalizations are often generating functions for interesting objects in arithmetic geometry, partition theory, and many other areas of mathematics. We refer the reader to texts such as \cite{CohenStromberg,DiamondShurman,OnoWeb} for more on the general theory of modular forms; we will introduce more details on modular form theory as they are required.

In the arithmetic and computational theories of modular forms, one of the most important tools is the {\it Sturm bound}. Originally written down by Sturm \cite{Sturm}, this bound reduces the identification of modular forms by their Fourier series to an explicit finite computation. In its most common form, this bound is a tool for studying congruences between modular forms \cite{AhlgrenBoylan,Calegari,Murty}, both in specific instances and in parameterized families. The Sturm bound has been applied fruitfully throughout numerous areas of mathematics, including in the study of arithmetic properties of partitions \cite{AhlgrenBoylan,OnoRamanujan,OnoParity,Radu}, mock modular forms \cite{DuncanGriffinOno}, modular representation theory \cite{GranvilleOno}, Ap\'{e}y series \cite{AhlgrenOno}, mathematical physics \cite{CollierEberhardtMuhlmannRodriguez,HarveyWu}, arithmetic geometry \cite{CremonaMazur,RouseThorner}, class numbers \cite{Beckwith,OnoClassReal}, moonshine \cite{DuncanGriffinOno}, Iwasawa theory \cite{Sharifi}, $L$-functions \cite{BeneishDarmonGehrmannRoset}, and $q$-multiple zeta values \cite{AmdeberhanAndrewsTauraso,AmdeberhanOnoSingh}; applications go on almost without limit.

The Sturm bound is usually stated in terms of the {\it order of vanishing} of a Fourier series, which we define for $f = \sum_{n \geq 0} a_f(n) q^n$ by
\begin{align*}
    \ord_\infty(f) = \min\{ n \in \Z : a_f(n) \not = 0 \}.
\end{align*}
We use the same notation regardless of which ring contains the coefficients of $f$. Then the classical Sturm bound is stated as follows.

\begin{theorem}[{\cite[Theorem 2.58]{OnoWeb}}] \label{T: Classical Sturm}
    Let $k \geq 1$ be an integer and let $\Gamma$ be a congruence subgroup of $\mathrm{SL}_2(\Z)$ with finite index $[\mathrm{SL}_2(\Z):\Gamma]$. Let $f$ be a holomorphic modular form for $\Gamma$ of weight $k$ with integer coefficients. The following are true:
    \begin{enumerate}
        \item $f$ is determined uniquely by its first $\frac{[\mathrm{SL}_2(\Z):\Gamma] k}{12}+1$ Fourier coefficients. Equivalently, if $\ord_\infty(f) > \frac{[\mathrm{SL}_2(\Z):\Gamma] k}{12}$, then $f = 0$.
        \item For any integer $m \geq 1$, $f \pmod{m}$ is determined uniquely by its first $\frac{[\mathrm{SL}_2(\Z):\Gamma] k}{12}+1$ Fourier coefficients modulo $m$. Equivalently, if $\ord_\infty(f) > \frac{[\mathrm{SL}_2(\Z):\Gamma] k}{12}$, then $f \equiv 0 \pmod{m}$.
    \end{enumerate}
\end{theorem}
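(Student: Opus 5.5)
The plan is to reduce everything to level one and then use the valence formula. For part (1), suppose $f$ is a holomorphic modular form of weight $k$ for $\Gamma$ with $\ord_\infty(f) > \frac{[\SL_2(\Z):\Gamma]k}{12}$, and assume $f \neq 0$. Let $N = [\SL_2(\Z):\Gamma]$ and choose coset representatives $\gamma_1 = I, \gamma_2, \dots, \gamma_N$ for $\Gamma\backslash \SL_2(\Z)$. Form the norm
\begin{align*}
F := \prod_{j=1}^{N} f|_k \gamma_j .
\end{align*}
This is a holomorphic modular form of weight $Nk$ for $\SL_2(\Z)$, since right multiplication by any element of $\SL_2(\Z)$ permutes the cosets. (If $-I\notin\Gamma$ and $k$ is odd, one may pass to $f^2$ first, or note that $F$ is invariant up to a sign character that is trivial on the relevant elements.) Each factor $f|_k\gamma_j$ is holomorphic at $\infty$, with expansion in $q^{1/h}$ for some width $h$, so its order at $\infty$ is $\geq 0$. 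The first factor contributes order $> Nk/12$. Hence $\ord_\infty(F) > Nk/12$.

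Next I apply the valence formula for $\SL_2(\Z)$. A nonzero level-one form $F$ of weight $Nk$ satisfies
\begin{align*}
\ord_\infty(F) + \tfrac12\ord_i(F) + \tfrac13\ord_\rho(F) + \sum_{P} \ord_P(F) = \tfrac{Nk}{12},
\end{align*}
so $\ord_\infty(F) \le Nk/12$. This contradicts the bound above, so $F = 0$ and therefore some $f|_k\gamma_j = 0$, which forces $f = 0$.

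Part (2) is the main obstacle, because the valence formula gives nothing modulo $m$. First I reduce to $m = \ell^r$ a prime power via the Chinese remainder theorem. Then I induct on $r$: if $f \equiv 0 \pmod{\ell}$, apply the result to $f/\ell$, which is again integral. So it suffices to treat $m = \ell$ prime.

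Suppose $\ord_\infty(\bar f) > Nk/12$, where $\bar f$ is the reduction of $f$ mod $\ell$. The difficulty is that the conjugates $f|_k\gamma_j$ have coefficients in $\Z[\zeta_M, 1/M]$ rather than $\Z$, so the norm must be controlled $\ell$-integrally. I would handle this with the $q$-expansion principle (Deligne–Rapoport/Katz), which gives bounded denominators: after multiplying $F$ by a suitable algebraic constant, $F$ has $\lambda$-integral coefficients for a prime $\lambda \mid \ell$ of $\Q(\zeta_M)$, and $\bar F \neq 0$ whenever $\bar f \neq 0$.

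Then the level-one statement mod $\lambda$ finishes the argument. The space $M_{Nk}(\SL_2(\Z))$ has an integral basis of the form $E_4^aE_6^b\Delta^c$ in echelon form with respect to $\ord_\infty$, and the number of basis elements is at most $\lfloor Nk/12\rfloor + 1$. Because of the echelon form, any integral form whose reduction vanishes to order $> Nk/12$ is $\equiv 0$. This forces $\bar F = 0$, contradicting $\bar F \neq 0$, so $\bar f = 0$.

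The step I expect to be genuinely delicate is verifying that the conjugates have reductions that are nonzero and of nonnegative order. When $\ell$ divides the level, one may need to avoid the $q$-expansion principle at bad primes and use a Hecke-stable integral structure instead.
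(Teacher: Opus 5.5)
Your proposal is correct and matches the argument the paper relies on. The paper does not prove this theorem itself but cites \cite[Theorem 2.58]{OnoWeb}; it names the valence formula and the triangular $\Z$-basis as the two classical level-one proofs, and in its concluding section it describes exactly Sturm's reduction via the norm $F=\prod_{\gamma}f|_k\gamma$ to level one (``up to bounded denominators which can be removed''), which is what you carry out. Your worry about primes $\ell$ dividing the level is unnecessary: all you need is that each conjugate $f|_k\gamma_j$ has coefficients in $\Q(\zeta_M)$ with bounded denominators at every prime (true for any form on a congruence subgroup), after which normalizing each factor to be $\lambda$-primitive and using that $\mathbb{F}_\lambda[[q^{1/h}]]$ is a domain gives $\bar F\neq 0$ with $\ord_\infty(\bar F)\geq \ord_\infty(\bar f)$.
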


This theorem has two classical proofs. One proof is linear algebraic, constructing an explicit $\Z$-basis for $M_{2k}$ with elements $f_i$ having Fourier expansions $q^i + O(q^{i+1})$. The other well-known approach is by the {\it valence formula} \cite[Theorem 1.29]{OnoWeb}, which says that a modular form can only have an explicitly bounded number of zeros in its fundamental domain. The second proof in particular can be cast in geometric terms in more general settings (i.e. Riemann--Roch), which has enabled various generalizations of the Sturm bound to be studied for modular objects arising from geometry. Such results exist for Hilbert modular forms \cite{Takai}, Siegel modular forms \cite{BanerjeeAironVishwakarmaDebnath,RichterRaum}, and modular forms over function fields \cite{ArmanaWei}. These generalizations find many applications in themselves, including to the identification of isogenous elliptic curves \cite{ArmanaWei}.

It is noteworthy to point out that while many of these problems address only individual congruences or individual calculations, some deal with congruences modulo all primes at once, famously including Ahlgren and Boylan's classification of Ramanujan-type congruences for $p(n)$ \cite{AhlgrenBoylan}, which implicitly uses Sturm-type results for families of modular forms of weight $\frac{\ell^2-1}{2}+4$ in order to study the congruence structure of $p(\ell n + \beta)$ modulo $\ell$. Thus, questions not simply of finite computability, but also of uniformity of the bound with respect to the modulus, are central both conceptually and practically. One can also see Sturm's theorem for more general congruence subgroups appear in related work on classification of congruences for generalized Frobenius partitions \cite{AhlgrenCastillo}.

The objective of this work is to formulate a new approach to deriving Sturm-type bounds, which does not rely on underlying geometry or holomorphic transformations. As such, this method will have applicability in new domains. The basic structure of our argument relies instead on the following input:
\begin{enumerate}
    \item A graded family of $q$-series $\mathcal F$ with finite-dimensional fixed-weight components for which we wish to obtain a Sturm bound.
    \item Suitably robust arithmetic relationships modulo an infinite family of primes (or prime powers) between $\mathcal F$ and a some other graded space $\mathcal G$ already possessing a Sturm-type bound modulo those primes (or prime powers).
    \item Bounds on Fourier coefficients of some particular basis for the graded subspaces of $\mathcal F$.
\end{enumerate}
For such families $\mathcal F$ and $\mathcal G$ of $q$-series, we describe a ``$p$-adic transfer" procedure whereby the Sturm-type bound for $\mathcal G$ is transferred to a Sturm-type bound for $\mathcal F$. The strength and flexibility of the new bound depends on structural properties of the basis used for graded subspaces of $\mathcal F$, which gives our method a wide scope for application in situations where computations are difficult. For instance, a known $\Z$-basis permits the derivation of Sturm bounds in $\Z$ and $\Z/m\Z$ automatically, whereas weaker basis properties (like a basis belonging to $\Z[[q]]$ which is not a $\Z$-basis) can still yield Sturm bounds for $\Z$. Neither does one not need any explicit formulas for the coefficients of the basis of $\mathcal F$ being used; it is enough to rely on upper bounds for said coefficients, which can be easier to obtain at times. These hypotheses are different from (and often weaker than) what Sturm's theorem or any of its analogs usually require, and therefore should be applicable to a much wider variety of spaces. In fact, we hope this method will apply even to spaces of $q$-series not directly related to modularity.

We demonstrate our method by working in the algebra of quasimodular forms (important in number theory, enumerative geometry and mathematical physics) using two distinct basis constructions to demonstrate the relative strengths of different inputs to the algorithm. distinctive basis properties to demonstrate the flexibility of the method. For a congruence subgroup $\Gamma$, a {\it quasimodular form} is a polynomial in the Eisenstein series $E_2$ with coefficients which are modular forms for $\Gamma$. Like modular forms, quasimodular forms possess a weight; we extend the definition given to modular forms by giving $E_2$ the weight 2. We will then use the notation $\widetilde{M}_{2k}$ to denote the vector space of quasimodular forms of weight $2k$, and we use $\widetilde{M}_{2k}^p$ and $\widetilde{M}_{2k}^\Z$ to denote that coefficients should be restricted to integers modulo $p$ or simply to the integers, respectively. Quasimodular were introduced by Kaneko and Zagier \cite{KanekoZagier} and were proven to count certain maps between curves. In the proceeding decades, quasimodular forms have made numerous appearances, to fields such as topology \cite{BlochOkounkov,OkounkovPandharipande,Zagier}, multiple zeta values and their $q$-analogs \cite{BachmannKuhn}, partitions  \cite{BringmannCraigIttersumPandey}, mock modular forms \cite{ImamogluRaumRichter}, prime numbers \cite{CraigIttersumOno}, mathematical physics \cite{AdamsWeinzierl,Dijkgraaf,DijkgraafHollandsSulkowskiVafa,KlemmManschot,DijkgraafBabyUniverses}, elliptic curves \cite{Dijkgraaf}, deformation theory \cite{Mazur}, and transcendence theory \cite{MurthRoth}.

Quasimodular forms share many similarities with modular forms, as for instance the coefficients of $E_2$ are also built from divisor sums, but many differences emerge. In particular, $E_2$ does not possess a holomorphic transformation law, as a modular form does, and therefore the geometric approaches to Sturm bounds do not proceed in like fashion. The only Sturm bounds in the literature of which the author is aware are due to van Ittersum and Ringeling \cite{IttersumRingeling}, who derive such a bound for quasimodular forms of the form $f + g E_2$, where $f \in M_{2k}$ and $g \in M_{2k-2}$. This covers only a small fraction of all quasimodular forms, and their proof, which generalizes the valence proof of the classical version, would be difficult to generalize to permit arbitrary powers of $E_2$.

It is the goal of our work to prove the first completely general theorem of Sturm type for quasimodular forms. We may state explicitly our first Sturm bound as follows.

\begin{theorem} \label{T: Main}
   Let $k \geq 1$ be an integer, and let $f$ be a quasimodular form for $\SL_2(\Z)$ weight $2k$. Then the following are true:
    \begin{enumerate}
        \item $f$ is determined uniquely by its first $\left\lceil \frac{k^5 \log(k)}{12} \right\rceil + 1$ Fourier coefficients. Equivalently, if $\ord_\infty(f) > \left\lceil \frac{k^5 \log(k)}{12} \right\rceil$, then $f = 0$.
        \item For any integer $m \geq 1$, $f \pmod{m}$ is determined uniquely by its first $\left\lceil \frac{k^5 \log(k)}{12} \right\rceil + 1$ Fourier coefficients modulo $m$. Equivalently, if $\ord_\infty(f \bmod{m}) > \left\lceil \frac{k^5 \log(k)}{12} \right\rceil$, then $f \equiv 0 \pmod{m}$.
        \end{enumerate}
\end{theorem}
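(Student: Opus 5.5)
We transfer the classical Sturm bound (Theorem \ref{T: Classical Sturm}) to quasimodular forms through the congruence $E_2 \equiv E_{p+1} \pmod{p}$, valid for primes $p \geq 5$ since $p \mid$ the denominator of $B_{p-1}$ and von Staudt--Clausen gives $-\frac{2(p+1)}{B_{p+1}}\sigma_p(n) \equiv -24\sigma_1(n)$ after reduction. Write $f = \sum_{j=0}^{k} g_j E_2^j$ with $g_j \in M_{2k-2j}$. Modulo $p$, each $E_2^j$ becomes the honest modular form $E_{p+1}^j$, and multiplying by a suitable power of $E_{p-1} \equiv 1 \pmod p$ homogenises the weights. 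So $f \bmod p$ agrees with a modular form of weight at most $2k + k(p-1)$. Theorem \ref{T: Classical Sturm}(2) then says that if $\ord_\infty(f \bmod p) > \frac{(2k+k(p-1))}{12} \approx \frac{kp}{12}$, we get $f \equiv 0 \pmod p$. The difficulty is that this bound grows with $p$, so it does not directly give a uniform result. The remedy is to choose a small prime $p$ and control everything else by coefficient size.

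\textbf{Step 1 (a $\Z$-basis).} Build an explicit $\Z$-basis of $\widetilde{M}_{2k}^\Z$ in echelon form. Use the products $E_2^j E_4^a E_6^b \Delta^c$ (with the usual Miller-type echelon basis of $M_{2k-2j}^\Z$ times $E_2^j$), then row-reduce. The dimension is $D \asymp k^2/12$. The point of the $\Z$-basis is that $f$, after clearing content, has coefficient vector in $\Z^D$. If $f$ vanishes to order greater than $B$, we must show $f=0$, which also handles the mod $m$ case by working with the reduction of the basis. Part (2) reduces to prime powers, and then to primes by dividing by $p$ repeatedly. This is valid because the basis is a $\Z$-basis, so $f \equiv 0 \pmod p$ means $f/p \in \widetilde{M}^\Z_{2k}$.

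\textbf{Step 2 (transfer).} Fix a prime $\ell$, and suppose $\ord_\infty(f \bmod \ell) > B$ with $f \not\equiv 0$. Consider the $D$ coefficients $c_1,\dots,c_D$ of $f$ in the basis. The basis elements have Fourier coefficients bounded polynomially, on the order of $n^{2k}\log$-type divisor bounds. Apply the $E_2\equiv E_{p+1}$ transfer for a prime $p$ in a range $p \le P$. Either $f \equiv 0 \pmod p$ for every such $p$, or Theorem \ref{T: Classical Sturm} produces a contradiction once $B > kP/12$. In the first case, a lift of $f$ with reduced coefficients $|c_i| < \ell/2$ has Fourier coefficients up to $q^N$ of size at most $D\ell N^{2k}$. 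These are divisible by $\prod_{p\le P} p = e^{(1+o(1))P}$. Choosing $P \approx k \cdot$(size exponent), namely $P \asymp k^4\log k$, forces those coefficients to vanish identically. This gives $f=0$ in $\Z$ (or mod $\ell$) by the echelon structure. Balancing then gives $B = kP/12 \approx k^5\log k/12$, matching the statement. The $k^4\log k$ arises from $D\cdot 2k\log N$ with $D\sim k^2$ and an extra factor $k$ from needing $N\sim$ the echelon depth.

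\textbf{Main obstacle.} The delicate step is making Step 2 rigorous uniformly. Two things must be controlled. First, the multiplicity structure: a vanishing-mod-all-small-primes statement must be upgraded to exact vanishing, which needs explicit upper bounds on the basis coefficients and their denominators after row reduction. Second, the case $p \mid m$ or $p \le 3$ must be excluded, where $E_2 \equiv E_{p+1}$ fails; this is done by simply omitting $2,3$ and $\ell$ from the prime product. I would first try to prove coefficient bounds of the form $|a(n)| \le (Cn)^{k}$ uniformly for the echelon basis. I expect the constants there to be what fixes the exact shape $\lceil k^5\log k/12\rceil$.
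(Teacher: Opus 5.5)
\textbf{The transfer step fails as stated.} From $f=\sum_j g_jE_2^j\in\Z[[q]]$ you cannot conclude that the $g_j$ are $p$-integral, and without that you cannot reduce term by term. For example, $(E_2E_{p-1}-E_{p+1})/p$ is $p$-integral of weight $p+1$, but its components $E_{p-1}/p$ and $-E_{p+1}/p$ are not.

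In fact the claim must fail for small $p$. Since $\widetilde M^{\Z}_{2k}=\widetilde M_{2k}\cap\Z[[q]]$ is saturated, its reduction mod $p$ has $\F_p$-dimension $D\sim k^2/12$. Reductions of $p$-integral forms of weight $k(p+1)$ span at most $k(p+1)/12+1$ dimensions. Combined with your ``divide by $p$ repeatedly'' step, the claim would even give a Sturm bound linear in $k$, which is impossible. (The paper's application of Lemma~\ref{L: Quasimodular to modular} to its $\Z$-basis rests on the same tacit hypothesis.)

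What is true, via Swinnerton-Dyer's theorem that $\overline{A}$ is coprime to the reduction of the polynomial giving $E_{p+1}$, is that the $g_j$ are $p$-integral for primes $p\ge 5$ with $p>2k$. But then a single such prime already proves part (1). If $\ord_\infty(f)>k(p+1)/12$, then $f\equiv 0\pmod p$, so $f/p\in\widetilde M^{\Z}_{2k}$ has the same vanishing, and iterating gives $f=0$. This yields a bound of order $k^2$. So the product over many primes is irrelevant to (1); its only conceivable role is in (2).

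\textbf{Step 2 does not prove part (2).} The hypothesis $\ord_\infty(f\bmod \ell)>B$ says nothing about $f$ modulo a prime $p\neq\ell$. So the dichotomy ``either $f\equiv 0\pmod p$ for all $p\le P$, or Sturm gives a contradiction'' is unfounded. Even granting it, the quantity $D\ell N^{2k}$ you compare with $\prod_{p\le P}p$ grows with $\ell$. This forces $P\gg\log \ell$ and destroys uniformity in the modulus, which is the whole content of (2).

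What (2) actually requires is that the $(B+1)\times D$ truncated coefficient matrix of a $\Z$-basis has full column rank modulo every prime $\ell$. For $\ell>2k$ with $k(\ell+1)\le 12B$, the transfer at $\ell$ itself does this, with no product of primes needed. You have no argument for $\ell\le 2k$, where the transfer is unavailable, or for $k(\ell+1)>12B$.

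Step 1 cannot fill this gap. The products $E_2^jE_4^aE_6^b\Delta^c$ span only a finite-index sublattice; for instance they miss $(E_2^2-E_4)/12$. Moreover, an echelon $\Z$-basis with unit pivots would \emph{itself} be the uniform Sturm bound, so invoking ``the echelon structure'' assumes the conclusion.

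Finally, your exponent is reverse-engineered. The inequality $\prod_{p\le P}p>D\ell N^{2k}$ only needs $P\approx 2k\log N+\log(D\ell)$. In the paper, $k^4\log k$ comes from a different mechanism. The Hadamard bound on $D\times D$ minors (Proposition~\ref{P: Full Column Rank}) is applied with entries inflated by the factor $((k+6)/6)^{k^2/3}$ from Proposition~\ref{P: Z-basis bound}. The logarithm of that bound is about $\frac{k^4}{27}\log k$, which forces $N\approx k^4/16$ primes and $p_N\ll k^4\log k$. This is then run through an iterated descent starting from an a priori bound.
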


\begin{remark}
    We make the following remarks:
    \begin{enumerate}
        \item As noted in Lemma \ref{L: Quasimodular to modular}, a quasimodular form of weight $2k$ is congruent modulo $p$ to a modular form of weight $2kp$, and Theorem \ref{T: Classical Sturm} can thus improve the bound for small primes. However, Theorem \ref{T: Main} has the important benefits that it is applicable independently of the modulus, and that it also works as a bound over $\Z[[q]]$. Furthermore, future improvements to the method (especially in certain determinant calculations) have the potential to achieve bounds much closer to the optimal one, as can be seen in Theorem \ref{T: Sturm Q Only}.
        \item We note that, in this work, we do not state nor prove any Sturm bound for congruence subgroups of $\mathrm{SL}_2(\Z)$. Strictly speaking, the fact that the dimension of spaces of quasimodular forms grows quadratically in weight aspect causes the classical approach of Sturm \cite[pg. 276--277]{Sturm} to break down, since taking a ``trace-form" only introduces a linear factor of the index of $\Gamma$ in $\SL_2(\Z)$, whereas a higher degree factor would be required. Congruence subgroups can, however, be managed using precisely our method; this is left for future work.
    \end{enumerate}
\end{remark}

Our method of proof does not rely on generalizing either of the two classical approaches towards the derivation of Sturm bounds for modular forms. The method is completely different, and builds on interplay between bounds in matrix analysis, growth rates of Fourier coefficients of quasimodular forms, and the arithmetic theory of modular forms modulo $p$ due to Serre \cite{Serre} and Swinnerton-Dyer \cite{SwinnertonDyer}. Using these tools, we create a kind of recursive descent process capable of establishing {\it ex nihilo} finite (but excessively large) initial bounds, and subsequently to refine this very large bound to a much smaller one. The end result of this technique is a kind of equilibrium point of this algorithm. The size of this stable point is related to the size of the Fourier coefficients and the relative knowledge we possess about the linear-algebraic relations (mainly determinants) of some particular basis.

Theorem \ref{T: Main} is in strict analogy with the classical theorem of Sturm: namely, the bound on the order of vanishing at infinity for $f$ does not depend on whether one investigates $f$ as a series in $\Z[[q]]$ or modulo $m$ for some $m \geq 2$. The most elementary explanation for this classical fact is that one can produce a $\Z$-basis for $M_{2k} \cap \Z[[q]]$ whose Fourier expansions have integer coefficients and take the form $1 + O(q), q + O(q^2), q^2 + O(q^3)$, and so on. Such a useful basis has never been constructed explicitly for quasimodular forms, and the method for constructing such a basis for modular forms does not generalize.\footnote{For modular forms, this construction relies properties of Ramanujan's $\Delta$-function; no function playing a similar role exists for quasimodular form theory. More on this in Section \ref{Sec: Nuts and Bolts}.} The method derived in this paper, however, permits some degree of flexibility, such that if we permit ourselves to ignore the modulo $m$ aspect that is traditionally associated with Sturm bounds, we can improve the bound in Theorem \ref{T: Main} in the case when coefficients are in $\Z$ rather than $\Z/m\Z$ (even in such cases, this may still be sufficient for proving congruence theorems).

\begin{theorem} \label{T: Sturm Q Only}
    Let $k \geq 1$ be an integer, and let $f$ be a quasimodular form for $\SL_2(\Z)$ weight $k$. Then $f$ is determined uniquely by its first $\left\lceil \frac{k^4 \log(k)}{2} \right\rceil +1$ Fourier coefficients. Equivalently, if $\ord_\infty(f) > \left\lceil \frac{k^4 \log(k)}{2} \right\rceil$, then $f = 0$.
\end{theorem}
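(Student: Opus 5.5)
Let $f$ be a quasimodular form of weight $k$ (so $k$ is even, say $k=2\kappa$) with $f\in\Q[[q]]$. If $f\neq 0$ and $\ord_\infty(f)>N$, then after scaling we may assume $f\in\Z[[q]]$ is primitive, i.e.\ its coefficients have gcd $1$. I would choose the basis $\{E_2^{j}E_4^aE_6^b : 2j+4a+6b=k\}$ of $\widetilde M_k$, which has dimension $d\asymp k^2/48$, and write $f=\sum_i c_i B_i$. The goal is to find a prime $p$ with two properties. First, $f\not\equiv 0\pmod p$. Second, $p$ is large enough that the modular-forms-mod-$p$ theory gives a contradiction with the vanishing order. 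For the second property I would use Lemma \ref{L: Quasimodular to modular}: $E_2\equiv E_{p+1}\pmod p$, so $f\bmod p$ is congruent to a modular form of weight at most $k\cdot\frac{p+1}{2}$, or of weight $kp$ in the paper's formulation. Theorem \ref{T: Classical Sturm}(2) then gives $f\equiv 0\pmod p$ once $\ord_\infty(f)>kp/12$, roughly. So the contradiction comes if we can find a prime $p\lesssim 12N/k$ with $f\not\equiv0\pmod p$ among the first $N$ coefficients. The circularity is resolved by the transfer step.

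The transfer step is as follows. Since $f$ vanishes to order $>N\ge d$, the coefficient vector $c$ lies in the kernel of the truncated coefficient matrix. Choose $d$ of the rows $n\le N$ where the $d\times d$ minor $D$ of the basis coefficient matrix is nonzero. This is possible because $f\mapsto$ ($q$-expansion up to $N$) is injective for $N$ large, which itself follows from a crude version of the argument. Rationally, $c=0$ would follow; the real issue is integrality. The approach is instead to take the primitive integral $c$ (which can be done since the $B_i$ lie in $\Z[[q]]$) and to note that $f\not\equiv 0\pmod p$ for every $p\nmid \mathrm{content}$. Then I would bound the product of primes $p$ for which the basis becomes degenerate mod $p$ by $|D|$. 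By Hadamard's inequality,
\[
|D|\le \prod_{\text{rows}}\|\text{row}\|_2\le \bigl(\sqrt d\, C N^{k-1}\log N\bigr)^{d},
\]
using coefficient bounds $|a_{B_i}(n)|\ll n^{k-1}$ up to constants depending on $k$ polynomially. Hence $\log|D|\ll d\,k\log N\asymp k^3\log N$.

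Next, by the prime number theorem (Chebyshev bounds suffice), the product of the primes up to $X$ exceeds $|D|$ once $X\gg k^3\log N$. So some prime $p\le X$ does not divide $D$. For that prime the basis is independent modulo $p$ in the chosen rows, and $f\not\equiv0\pmod p$. Applying the mod-$p$ Sturm bound, vanishing past order $kp/12\approx k\cdot k^3\log N/12$ forces a contradiction. We therefore need $N\ge C k^4\log N/12$, and iterating from a huge initial $N$ down to the fixed point gives $N\approx \frac{k^4\log k}{2}$ after tracking constants. This is the ``equilibrium'' of the descent. For the $\Z/m\Z$ version one would need a $\Z$-basis instead, which costs the extra factor of $k$; that is not needed here.

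The main obstacle is making the constants come out to $\frac{k^4\log k}{2}$. That requires careful explicit coefficient bounds for $E_2^jE_4^aE_6^b$ (e.g.\ $|a(n)|\le \binom{n+k}{k}\cdot 504^{k}$-type estimates, whose logarithm is $O(k\log N)$) and an explicit Chebyshev-type lower bound for $\theta(X)$. I would first try the crude bound $\log|a(n)|\le k\log(240n)$, then absorb it into the fixed-point inequality, checking small $k$ separately by direct computation.
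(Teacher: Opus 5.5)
Your ingredients are the paper's: a Hadamard bound on a nonzero $d\times d$ minor of the coefficient matrix of the $E_2^jE_4^aE_6^b$, a product of small primes exceeding it, $E_2\equiv E_{p+1}\pmod p$ with the mod-$p$ Sturm bound, and iteration to a fixed point of size $k^4\log k$. But the transfer step, as you wrote it, is circular. You take the minor $D$ from rows $n\le N$, where $N$ is also the order to which $f$ vanishes. If such a $D\neq 0$ exists then $c=0$ at once and no prime is needed, and your reason for its existence (injectivity of truncation at $N$) is exactly what is being proved. So ``the real issue is integrality'' misidentifies the point. Reduction mod $p$ is needed because the nonvanishing minor lives in a \emph{longer} window than the one on which $f$ vanishes, and the mod-$p$ Sturm bound is what transfers independence from the long window to the short one. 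The step must use two windows, as in the paper's descent (Proposition \ref{P: Q - Reduction of Bound} and Stage 3):
\begin{itemize}
\item Let $N_i$ be a bound already known to work; initially any bound from finite-dimensionality will do.
\item Take $D\neq 0$ from rows $\le N_i$, so $\log|D|\ll k^3\log N_i$, and pick a prime $p\ge 5$ with $p\nmid D$ and $p\ll k^3\log N_i$.
\item Suppose $f=\sum c_iB_i$, with $c\in\Z^d$ primitive, vanishes to order $>N_{i+1}:=\lfloor k(p+1)/24\rfloor$. Then $f$ is congruent mod $p$ to a $p$-integral modular form of weight $k(p+1)/2$ vanishing beyond its Sturm bound.
\item Hence $\sum \bar c_i\bar B_i=0$ in $\F_p[[q]]$ with $\bar c\neq 0$, contradicting $p\nmid D$.
\end{itemize}
So $N_{i+1}\ll k^4\log N_i$ is a new valid bound, and only then does iterating reach the fixed point. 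Your closing sentence about iterating shows you have this descent in mind, but the single step has to be stated this way.

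With that repair your argument is the paper's, in a slightly cleaner form. Using one nonzero minor and one prime $p\nmid D$ is exactly what is needed from Proposition \ref{P: Full Column Rank}, since a single good prime suffices. Multiplying by $E_{p-1}^{k/2-j}$, rather than following Lemma \ref{L: Quasimodular to modular}, gives weight $k(p+1)/2$ instead of $kp$, which roughly halves the mod-$p$ Sturm bound. Smaller points:
\begin{itemize}
\item The constants in the coefficient bounds are exponential in the weight, not polynomial (compare $109\cdot 2^{2k-1}$ in Lemma \ref{L: Eisenstein product bound}). This is harmless, since it only adds $O(k)$ to the logarithm of each entry.
\item You should exclude $p=2,3$.
\item The explicit constant tracking and the finite check of small weights are still owed.
\end{itemize}
Finally, the paper's proof is for weight $2k$; the ``weight $k$'' in the statement is a slip. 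Your literal reading therefore targets a weaker statement, though the same argument, with its better constant, should also reach the weight-$2k$ version.
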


The proof of Theorem \ref{T: Sturm Q Only} follows on nearly identical lines to that of Theorem \ref{T: Main}. Namely, both proofs produce Sturm bounds out of the same algorithmic procedure applied to a different $\Q$-basis for spaces of quasimodular forms; Theorem \ref{T: Main} then additionally extends to congruences because this $\Q$-basis is also a $\Z$-basis, while the basis used to prove Theorem \ref{T: Sturm Q Only} is only a $\Q$-basis. The differences in the resulting bounds arise from the simple fact that the $\Z$-basis used is only constructed algorithmically, and not much of use can be said about it. More explicit constructions for a $\Z$-bases may yield improved results.

Neither Theorem \ref{T: Main} nor Theorem \ref{T: Sturm Q Only} should be taken as optimal (see Section \ref{Sec: Conclusion}), and there are indeed some cases in which Theorem \ref{T: Classical Sturm} alongside the fact that $E_2 \equiv E_{p+1} \pmod{p}$ for any prime $p \geq 3$ (see Lemma \ref{L: Quasimodular to modular}) sometimes can provide more precise information if specific knowledge of a case at hand is known. Theorem \ref{T: Main} (2), however, does not depend on the modulus under consideration and may therefore be used in scenarios where large moduli must be considered. Beyond this, no tools prior to our results have given any kind of Sturm bound for quasimodular forms in with coefficients in $\Z$. Since many fields in which quasimodular forms arise are concerned with counting problems \cite{KanekoZagier} and not with arithmetic partition-style congruences, this distinction is critical to understanding the strengths of this technique. With Theorems \ref{T: Main} and \ref{T: Sturm Q Only} in hand, one may now identify not merely congruences for small primes using a finite number of their Fourier coefficients, but congruences for any primes and even exact formulas for quasimodular forms. The method is also flexible, such that the future development of more useful basis expansions will lead to improvements in the bounds of Theorems \ref{T: Main} and \ref{T: Sturm Q Only}.

In short, these tools provide a robust and rigorous framework to develop computational theories of quasimodular forms in particular, and prospectively many other theories of $q$-series, and provide uniformity which can be used in number-theoretic scenarios in which the modulus grows (e.g. study of families of congruences modulo $\ell^n$ or congruence classification problems of the Ahlgren-Boylan type).

The remainder of this paper proceeds as follows. In Section \ref{Sec: Nuts and Bolts}, we summarize important content from the existing literature which we require. In Section \ref{Sec: Bounds}, we give uniform upper bounds on Fourier coefficients on a basis of $\widetilde{M}_{2k}^\Z$. In Section \ref{Sec: Linear Algebra}, we utilize tools from linear algebra to establish criteria for linear independence of integer vectors. Section \ref{Sec: Proof} combines these tools into a $p$-adic transfer procedure that produces the bound in Theorem \ref{T: Main} as a ``stable point" to a descent argument. In Section \ref{S: Proof Q}, we modify some details of this proof to derive Theorem \ref{T: Sturm Q Only}. Finally, in Section \ref{Sec: Conclusion}, we discuss the theoretical boundaries of this method, possible extensions it may have to other areas both inside and outside modular form theories, and we provide numerical data, algorithms and conjectures concerning on optimal Sturm bounds for $\widetilde{M}_{2k}$.

\section*{Acknowledgments}

The author thanks Jan-Willem van Ittersum and Lukas Mauth for helpful discussions which have improved this manuscript. The views expressed in this article are those of the author and do not reflect the official policy or position of the U.S. Naval Academy, Department of the Navy, the Department of War, or the U.S. Government.

\section{Nuts and Bolts} \label{Sec: Nuts and Bolts}

\subsection{Modular forms basics}

We begin with a classical definition of various terms and vector spaces important in the discussions of modular and quasimodular forms. We note here that all modular forms for $\mathrm{SL}_2(\Z)$ have even weight.

\begin{definition}
    Let $k \geq 1$ be an integer.
    \begin{enumerate}
        \item A {\it modular form} of weight $2k$ for $\mathrm{SL}_2(\Z)$ is an analytic function $f(z)$ on the upper half plane, having a Fourier expansion in $q = e^{2\pi i z}$ with only non-negative powers of $q$, and satisfying the transformation law
        \begin{align*}
            f\lp \dfrac{az + b}{cz + d} \rp = \lp cz + d \rp^k f(z).
        \end{align*}
        The space of modular forms of weight $2k$ is denoted $M_{2k}$ and the space of cusp forms (those modular forms whose Fourier expansion has no $q^0$ term) of weight $2k$ is denoted $S_{2k}$. We also let $M_{2k}^\Z$ and $S_{2k}^\Z$ denote the spaces of forms having integer coefficients, e.g. $M_{2k}^\Z = M_{2k} \cap \Z[[q]]$.
    
        \item A {\it quasimodular form} of weight $2k$ is any analytic function $f(z)$ of the form
        \begin{align*}
            f = \sum_{j=0}^k E_2^j f_j,
        \end{align*}
        where $f_j$ is a modular form of weight $2k - 2j$. The space of quasimodular forms of weight $2k$ is $\widetilde{M}_{2k}$, and we likewise define $\widetilde{M}_{2k}^\Z = \widetilde{M}_{2k} \cap \Z[[q]]$.
    \end{enumerate}
    All modular forms and quasimodular forms have Fourier expansions $f(z) = \sum_{n \geq 0} a_f(n) q^n$, where $q = e^{2\pi i z}$.
\end{definition}

\begin{remark}
    In some works, a quasimodular form of weight $2k$ is taken to be any element of $\bigoplus\limits_{m \leq k} \widetilde{M}_{2m}$. The case of $\widetilde{M}_{2k}$ is then called the case of {\it pure weight}, and the more general space the case of {\it mixed weight}. In this paper, we always refer to quasimodular forms of pure weight unless specifically mentioned otherwise.
\end{remark}

In this section, we set notation and cover several simple facts about spaces of modular and quasimodular forms.

We recall two foundational examples in the theory of modular forms; the Eisenstein series and the Ramanujan $\Delta$-function.

\begin{definition}
    Let $k \geq 2$ be an even integer. We define the {\it Eisenstein series} of weight $k$ by
    \begin{align*}
        E_k(z) := 1 - \dfrac{2k}{B_k} \sum_{n \geq 1} \sigma_{k-1}(n) q^n, \ \ \ \ q = e^{2\pi i z},
    \end{align*}
    where $B_k$ is the $k$th Bernoulli number and $\sigma_{k-1}(n) = \sum\limits_{d|n} d^{k-1}$. We utilize in particular
    \begin{align*}
        E_2(z) &= 1 - 24 \sum_{n \geq 1} \sigma_1(n) q^n, \\
        E_4(z) &= 1 + 240 \sum_{n \geq 1} \sigma_3(n) q^n, \\
        E_6(z) &= 1 - 504 \sum_{n \geq 1} \sigma_5(n) q^n.
    \end{align*}
\end{definition}

\begin{definition}
    The Ramanujan $\Delta$-function is the modular form of weight 12 defined by
    \begin{align*}
        \Delta(z) := q \prod_{n \geq 1} \lp 1 - q^n \rp^{24} = \dfrac{E_4(z)^3 - E_6(z)^2}{1728} \in \Z[[q]].
    \end{align*}
    We denote the Fourier coefficients of $\Delta$ by the Ramanujan $\tau$-function:
    \begin{align*}
        \Delta(z) = \sum_{n \geq 1} \tau(n) q^n.
    \end{align*}
\end{definition}

The $\Delta$-function is the foundational example of a {\it cusp form}, meaning we have $\tau(0) = 0$; the space of all cusp forms will be denoted by $S_{2k}$. Because of the product formula for $\Delta$, is it known that $\Delta \in S_{12}^\Z$ and that $\Delta(z) \not = 0$ for any $z$ in the upper half plane. This fact implies in a straightforward manner that the map $f \mapsto \Delta f$ is an isomorphism $M_{2k} \to S_{2k+12}$. This fact, along with known computations for small weights, produces the following dimension formula:
\begin{align} \label{Eq: Valence}
    \dim\limits_\C M_k = \begin{cases}
        \left\lfloor \dfrac{k}{12} \right\rfloor + 1 & \text{if } k \not \equiv 2 \pmod{12}, \\[+0.5cm]
        \left\lfloor \dfrac{k}{12} \right\rfloor & \text{if } k \equiv 2 \pmod{12}.
    \end{cases}
\end{align}
It is then straightforward to demonstrate that $\Delta^a E_4^b E_6^c$ with $b,c$ chosen appropriately form a $\Z$-basis of $M_{2k}$, and indeed this is ``triangular basis" in the sense that the $q$-expansions of these basis elements are of form $q^a + O(q^{a+1})$.

We will rely on several methods of generating the space of modular forms; this particular basis will not be of much use to us. The most classical, which emerge precisely from the aforementioned isomorphism $f \mapsto f\Delta$, are the facts
\begin{align*}
    M_{2k} = \langle E_4^a E_6^b : 4a + 6b = 2k \rangle \ \ \ \text{and} \ \ \ \widetilde{M}_{2k} = \langle E_2^a E_4^b E_6^c : 2a + 4b + 6c = 2k \rangle.
\end{align*}
We note that $M_2 = \emptyset$, since $E_2$, the only Eisenstein series of weight 2, is not modular. We also note the classical {\it valence formula} for the dimension of $M_{2k}$ as a vector space. We can also characterize the dimension of the space of quasimodular forms using that of modular forms:
\begin{align} \label{Eq: Quasimodular Dimension}
    \dim\limits_\C \widetilde{M}_{2k} = \sum_{j=0}^k \dim_\C M_{2j} \leq \sum_{j=0}^k \lp \dfrac{j}{6}+1 \rp = \dfrac{k^2+k+12}{12}.
\end{align}

\subsection{Modular forms modulo $p$} \label{S: Modular forms modulo p}

A major component of our argument relies on the theory of modular forms modulo primes, particularly of odd primes. There are many beautiful developments of such theories from arithmetic and geometric perspectives by Serre \cite{Serre} and Katz \cite{Katz}, even extending to $p$-adic modular forms. We do not, however, need any of the stronger tools of the full $p$-adic theories; only the reduction modulo $p$, which is explained fully in Serre \cite{Serre}\footnote{It is very possible that using additional structure modulo higher powers of primes may be of use in this method; we rely only on results modulo $p$ for simplicity.}. In particular, key elements of these theories lie behind the resolution of Ramanujan's famous conjecture that the only congruences for $p(n)$ of the type $p\lp \ell n + \beta \rp \equiv 0 \pmod{\ell}$ occur for $\ell = 5, 7, 11$ and additional approaches to proving that such congruences exist; see in particular \cite{AhlgrenBoylan,Calegari,Radu}. We will lay out just those first elementary bits we need; namely, of the existence of congruences for Eisenstein series modulo primes.

\begin{lemma}[{\cite[Lemma 1.22]{OnoWeb}}] \label{L: Eisenstein congruences}
    Let $p \geq 3$ be prime. Then 
    \begin{align*}
        E_{p+1} \equiv E_2 \pmod{p} \ \ \ \text{and} \ \ \ E_{p-1} \equiv 1 \pmod{p}.
    \end{align*}
\end{lemma}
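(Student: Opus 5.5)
The plan is to prove both congruences with the von Staudt--Clausen theorem and Fermat's little theorem. From the definition,
\begin{align*}
    E_k = 1 - \dfrac{2k}{B_k} \sum_{n \geq 1} \sigma_{k-1}(n) q^n .
\end{align*}
Everything therefore reduces to the $p$-adic size of the normalizing constant $-2k/B_k$ for $k = p-1$ and $k = p+1$, together with the behaviour of $d^{k-1}$ modulo $p$. Note that $E_{p-1}$ and $E_{p+1}$ need not lie in $\Z[[q]]$ (for instance when $B_k$ has a numerator prime to $k$). The congruences are therefore to be read in $\Z_{(p)}[[q]]$, and I will show that every coefficient is $p$-integral.

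For $E_{p-1}$ (assume $p \geq 5$ so that $p-1 \geq 4$; the case $p=3$ would be treated separately, or read with $E_2$ in place of $E_{p-1}$), von Staudt--Clausen gives $B_{p-1} + 1/p \in \Z_{(p)}$, since $(p-1) \mid (p-1)$. Hence $pB_{p-1} \equiv -1 \pmod p$, so $v_p(B_{p-1}) = -1$. Consequently
\begin{align*}
    \dfrac{2(p-1)}{B_{p-1}} = \dfrac{2p(p-1)}{pB_{p-1}}
\end{align*}
has valuation $v_p = 1$. Since $\sigma_{p-2}(n) \in \Z$, every nonconstant coefficient of $E_{p-1}$ is divisible by $p$ in $\Z_{(p)}$, and so $E_{p-1} \equiv 1 \pmod p$.

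For $E_{p+1}$, Fermat gives $d^{p} \equiv d \pmod p$ for every integer $d$. Hence $\sigma_p(n) \equiv \sigma_1(n) \pmod p$, and it suffices to show $-\frac{2(p+1)}{B_{p+1}} \equiv -24 \pmod p$ in $\Z_{(p)}$. Here $p-1 \nmid p+1$ once $p \geq 5$, so von Staudt--Clausen gives $B_{p+1} \in \Z_{(p)}$. The needed input is the Kummer congruence, in the form that $B_k/k \pmod p$ depends only on $k \bmod (p-1)$ when $p-1 \nmid k$. Since $p+1 \equiv 2 \pmod{p-1}$, this yields
\begin{align*}
    \dfrac{B_{p+1}}{p+1} \equiv \dfrac{B_2}{2} = \dfrac{1}{12} \pmod p .
\end{align*}
This is a $p$-adic unit for $p \geq 5$. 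Inverting gives $\frac{2(p+1)}{B_{p+1}} \equiv 24 \pmod p$, which is exactly what is needed.

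The case $p=3$ must be checked by hand. Here $E_4 = 1 + 240\sum \sigma_3(n)q^n$ with $240 \equiv 0$ and $-24 \equiv 0 \pmod 3$, so both sides of $E_4 \equiv E_2$ are $\equiv 1$, and the first congruence is the trivial identity for $E_2$ read modulo $3$ (indeed $E_2 \equiv 1 \pmod 3$). The only genuinely nontrivial input, and the main obstacle if one wants to be self-contained, is the Kummer congruence for $B_{p+1}/(p+1)$. I would simply cite it; alternatively one can prove it by evaluating $\sum_{a=1}^{p-1} a^{k}$ modulo $p^2$ via Faulhaber's formula.
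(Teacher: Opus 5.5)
Your proof is correct. It is the standard argument behind the result, which the paper cites from Ono without proof: von Staudt--Clausen gives $v_p(B_{p-1})=-1$ and hence $E_{p-1}\equiv 1 \pmod p$, while Fermat ($\sigma_p(n)\equiv\sigma_1(n) \pmod p$) together with Kummer's congruence $B_{p+1}/(p+1)\equiv B_2/2=1/12 \pmod p$ gives $E_{p+1}\equiv E_2 \pmod p$. You also check $p=3$ by hand, which is needed because Kummer's congruence does not apply when $p-1\mid 2$ and there $240\equiv 24\equiv 0 \pmod 3$ does the job. Together with your reading of the congruences in $\Z_{(p)}[[q]]$, this makes explicit two points the citation leaves implicit.
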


We utilize this elementary result to prove the following fact about quasimodular forms.

\begin{lemma} \label{L: Quasimodular to modular}
    Let $f \in \widetilde{M}_{2k}$ for $k \geq 1$, and let $p \geq 3$ be prime. Then there exists $g \in M_{2kp}$ such that $f \equiv g \pmod{p}$.
\end{lemma}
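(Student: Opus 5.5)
The plan is to write $f$ in the standard generating set and replace each factor by a modular form using Lemma \ref{L: Eisenstein congruences}, padding with powers of $E_{p-1}$ so that everything lands in weight exactly $2kp$. We interpret ``$f \equiv g \pmod p$'' for $f$ with $p$-integral coefficients, say $f \in \widetilde{M}_{2k}^\Z$ (or more generally with coefficients in $\Z_{(p)}$). Then $f \bmod p$ makes sense, and we seek $g \in M_{2kp}$ with $p$-integral coefficients and $f \equiv g \pmod p$ coefficientwise.

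First, reduce to monomials. By the generating set $\widetilde{M}_{2k} = \langle E_2^a E_4^b E_6^c : 2a+4b+6c = 2k\rangle$, write $f = \sum c_{a,b,c} E_2^a E_4^b E_6^c$. The one subtlety is $p$-integrality of the $c_{a,b,c}$. The cleanest route is to use the structure $\widetilde{M} = M[E_2]$ and note that $E_2, E_4, E_6$ are algebraically independent. Moreover, the $\Z_{(p)}$-algebra of $p$-integral quasimodular forms is generated by monomials in $E_2,E_4,E_6$ together with $\Delta$. Here $\Delta = (E_4^3-E_6^2)/1728$ is itself $p$-integral for $p\ge 5$, and $p=3$ needs care because $1728 = 2^6 3^3$. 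So I would instead expand $f = \sum_j E_2^j f_j$ with $f_j \in M_{2k-2j}$. One must then check that $p$-integrality of $f$ forces $p$-integrality of each $f_j$. I would try to prove this by triangularity: use the $\Z$-basis $\Delta^a E_4^b E_6^c$ of $M_{2k-2j}$ together with the fact that the $E_2^j\Delta^aE_4^bE_6^c$ remain linearly independent mod $p$. I expect this integrality issue to be the main obstacle. A fallback is to treat the lemma as a statement about $f$ in the $\Z_{(p)}$-span of the monomials $E_2^jh$ with $h$ running over the integral basis, which is what the later transfer argument actually uses.

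Second, handle a single term $E_2^j h$ with $h \in M_{2k-2j}$ $p$-integral. By Lemma \ref{L: Eisenstein congruences}, $E_2^j \equiv E_{p+1}^j \pmod p$, and $E_{p+1}^j h \in M_{j(p+1) + 2k - 2j}$. Its weight is $2k + j(p-1)$. The target weight $2kp$ exceeds this by $(p-1)(2k - j)$, which is a nonnegative multiple of $p-1$ since $j \le k$. Multiplying by $E_{p-1}^{2k-j} \equiv 1 \pmod p$ therefore gives
\begin{align*}
E_2^j h \equiv E_{p+1}^j E_{p-1}^{2k-j} h \pmod p, \qquad E_{p+1}^j E_{p-1}^{2k-j} h \in M_{2kp}.
\end{align*}
Note that $E_{p\pm1}$ have $p$-integral coefficients, since by von Staudt–Clausen $p \nmid$ the denominator of $B_{p\pm1}/(p\pm1)$ in the relevant sense; this is exactly what Lemma \ref{L: Eisenstein congruences} encodes. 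Hence products of congruences mod $p$ among $p$-integral series remain valid.

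Third, sum over $j$: $g := \sum_j E_{p+1}^j E_{p-1}^{2k-j} f_j \in M_{2kp}$ satisfies $g \equiv f \pmod p$. This completes the plan, with the only nontrivial point being the integrality of the decomposition discussed above.
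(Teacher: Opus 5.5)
Your construction is the paper's: expand $f=\sum_j E_2^jf_j$, replace $E_2^j$ by $E_{p+1}^j$, and pad with $E_{p-1}^{2k-j}$. Your exponent $2k-j$ is the correct one; the paper's display has the typo $2k-2j$. However, the integrality step you flag is a genuine gap, and the repair you propose would fail. The products $E_2^jh$, with $h$ running over integral bases of $M_{2k-2j}$, are \emph{not} independent mod $p$. Likewise, $p$-integral quasimodular forms are not generated over $\Z_{(p)}$ by $E_2,E_4,E_6,\Delta$. Since $E_2E_4-E_6=3DE_4=720\sum_{n\ge1}n\sigma_3(n)q^n$, the form $f=(E_2E_4-E_6)/5$ lies in $\widetilde{M}_6^{\Z}$, yet its components are $f_1=E_4/5$ and $f_0=-E_6/5$.

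Clearing denominators does not fix this. One checks that $f=3D\bigl((E_4-1)/5\bigr)\equiv E_6^5-E_6\pmod 5$. That reduction is congruent to no modular form of weight below $30=2kp$. By contrast, substituting $E_{p+1}$ into $p$-integral pieces only ever produces weight at most $2k+k(p-1)=18$ before padding.

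So your argument, like the paper's (which silently assumes the $f_j$ are $p$-integral), proves the lemma only for $f\in\sum_jE_2^jM_{2k-2j}^{\Z_{(p)}}$. That is your fallback, and it can be strictly smaller than the set of all $p$-integral $f$. Nor is the fallback ``what the later transfer argument actually uses''. In the proof of Theorem \ref{T: Main} the congruence is applied to the $\Z$-basis of Proposition \ref{P: Z-basis bound}, whose elements need not lie in that span. The fallback does suffice for Theorem \ref{T: Sturm Q Only}, which uses the monomials. Covering all $p$-integral $f$ needs a genuinely different input, such as the action of $D$ on forms mod $p$ together with their filtration (lowest weight). The example above comes from the congruence $D\bigl((E_{p-1}-1)/p\bigr)\equiv (E_{p+1}-E_{p+1}^p)/12\pmod p$.

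Separately, at $p=3$ the padding step fails, again exactly as in the paper. There $E_{p-1}=E_2$ is not modular, so $E_{p+1}^jE_{p-1}^{2k-j}h$ lies in $\widetilde{M}_{6k}$, not $M_{6k}$. Instead use $E_2\equiv E_4\equiv E_6\equiv 1\pmod 3$ and take $E_2^jh\equiv hE_4^aE_6^b$ with $4a+6b=4k+2j$. This is solvable because $4k+2j\ge 4$ is even.
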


\begin{proof}
    By the definition of $\widetilde{M}_{2k}$, we have $f = \sum_{j=0}^k f_j E_2^j$ for some modular forms $f_j \in M_{2k-2j}$. We then have by Lemma \ref{L: Eisenstein congruences}, for each $j$, that
    \begin{align*}
        f_j E_2^j \equiv g_j := f_j E_{p+1}^j E_{p-1}^{2k-2j} \pmod{p},
    \end{align*}
    and that each $g_j$ has weight $(2k-2j) + j(p+1) + (2k-j)(p-1) = 2kp$. Thus, we let $g := \sum_{j=0}^k g_j \in M_{2kp}$ and it is apparent that $f \equiv g \pmod{p}$.
\end{proof}

In fact, something much stronger than Lemmas \ref{L: Eisenstein congruences}  and \ref{L: Quasimodular to modular} exists, due to Swinnerton-Dyer \cite{SwinnertonDyer}. The summary version of the result is that all congruences modulo $p$ between (quasi-)modular forms arise in some way from the congruence $E_{p-1} \equiv 1 \pmod{p}$ (i.e. from the ``Hasse invariant" \cite{Calegari}). This result is of substantial use in the theory of modular forms modulo $p$; we give now the required background to state this result. To state these results conveniently, we use the notations $\F_p$ when we are concerned with the integers modulo $p$

\begin{proposition} \label{P: Serre fact}
    Let $p \geq 5$ be a prime, $k \geq 0$ an integer, and let $M_{2k}$ and $M_{2k}^p$ denote the spaces of modular forms of weight $2k$ over $\C$ and $\F_p$, respectively. Let $M = \bigoplus\limits_{k \geq 0} M_{2k}$ and $M^p = \bigoplus\limits_{k \geq 0} M_{2k}^p$ denote the full algebras of all modular forms over $\C$ and $\F_p$, respectively. Then the following are true: 
    \begin{enumerate}
        \item There are injective maps $M^p_{2k} \hookrightarrow M^p_{2k+p-1}$ given by $f \mapsto f E_{p-1}$.
        \item Let $M^{(\alpha)}$ denote, for $\alpha \in \Z/(p-1)\Z$, the union of all modular forms of weight congruent to $\alpha \pmod{p-1}$. Then $M^p = \bigoplus\limits_{\alpha \in \Z/(p-1)\Z} M^{(\alpha)}$.
        \item Let $A(Q,R)$ be the isobaric weight $p-1$ polynomial such that $A(E_4, E_6) = E_{p-1}$, and let $\overline{A}$ denote the reduction of this polynomial modulo $p$. Then there is an isomorphism
        \begin{align*}
            \F_p[Q,R]/\langle \overline{A}(Q,R) \rangle \xrightarrow{\sim} \overline{M}.
        \end{align*}
        In other words, all relations in $M^p$ derive either from relations in $M$ or from the relation $E_{p-1} \equiv 1 \pmod{p}$.
    \end{enumerate}
\end{proposition}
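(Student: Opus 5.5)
The plan is to prove part (3) first and then deduce parts (1) and (2) from it. Throughout, $M_{2k}^p$ means the image of $M_{2k}^\Z$ under reduction of $q$-expansions modulo $p$. The basic input is that the triangular basis $\Delta^a E_4^b E_6^c$ is a $\Z$-basis of $M_{2k}^\Z$. Consequently the map $\F_p[Q,R]_{2k} \to \F_p[[q]]$ sending $P \mapsto P(E_4,E_6) \bmod p$ is injective on each fixed isobaric weight $2k$. In other words, in a single weight there are no new relations modulo $p$.

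For part (1), $E_{p-1}$ has $p$-integral coefficients and satisfies $E_{p-1} \equiv 1 \pmod p$ by Lemma \ref{L: Eisenstein congruences}. Hence $fE_{p-1}$ is a form of weight $2k+p-1$ with the same reduced $q$-expansion as $f$, and multiplication by $E_{p-1}$ is injective on $q$-expansions. This also shows $M^{(\alpha)}$ is an increasing union, and (via multiplying by powers of $E_{p-1}$) that any finite sum of forms whose weights are congruent modulo $p-1$ can be put into a single common weight.

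For part (3), I note that the relation forced by Lemma \ref{L: Eisenstein congruences} is really $\overline{A}=1$, so the ideal should be read as $\langle \overline{A}-1\rangle$. The map $\F_p[Q,R]\to \F_p[[q]]$ given by $Q\mapsto E_4$, $R\mapsto E_6$ clearly kills $\overline{A}-1$, and I want to show nothing else is killed. Take $P=\sum_k P_k$ in the kernel, decomposed into isobaric pieces. Multiplying the pieces by powers of $\overline{A}$ (which changes $P$ only modulo $\overline{A}-1$) collapses each residue class of weights modulo $p-1$ to a single weight. By the single-weight injectivity above, the claim then reduces to the key assertion: if $f_1,\dots,f_r$ are nonzero mod-$p$ forms of weights lying in distinct classes modulo $p-1$, then $\sum f_i \not\equiv 0$. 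Equivalently, if $f\equiv g$ with $f,g$ of weights $k_1,k_2$, then $k_1\equiv k_2 \pmod{p-1}$, and $f=\overline{A}^{\,m}g$ as polynomials when $k_1 = k_2 + m(p-1)$.

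This key assertion is the main obstacle, and I would follow Swinnerton-Dyer. The first step is to show that $\overline{A}$ has no repeated irreducible factors and is coprime to $\overline{B}$, where $B(E_4,E_6)=E_{p+1}$. I would get this from Ramanujan's derivative identities: the derivation $\vartheta = q\frac{d}{dq}$ acts on weight-$k$ forms by $\vartheta f = \frac{k}{12}E_2 f + \partial f$, with $\partial$ a derivation on $\F_p[Q,R]$, and by Lemma \ref{L: Eisenstein congruences} one has $E_2\equiv E_{p+1}$. Applying this to $E_{p-1}$, whose weight is $p-1\equiv -1$ modulo $p$, yields $\partial \overline{A} = \frac{1}{12}\overline{B}$ up to sign conventions, because $\vartheta E_{p-1} \equiv 0$. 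A common factor of $\overline{A}$ and $\overline{B}$ would then divide both $\overline{A}$ and $\partial\overline{A}$, and I expect to rule this out by showing $\overline{A}$ and $\overline{B}$ have no common zero. Concretely, I would check this on $j$-invariants using Deligne's/Igusa's description of the zeros of $\overline{A}$ as the supersingular values; if that is too heavy, I would try a direct resultant computation.

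With squarefreeness and coprimality in hand, the argument runs through the weight filtration. Suppose $f - g \equiv 0$ in $q$-expansions but $f \neq \overline{A}^{m} g$ as polynomials. Consider $f^{p-1}$ and $g^{p-1}$, whose weights are both divisible by $p-1$, or alternatively compare $\vartheta f$ with $\vartheta g$. Differentiating produces the terms $k_1\overline{B}f$ and $k_2\overline{B}g$. Because $\overline{A}\nmid \overline{B}$ and $\overline{A}$ is squarefree, this forces $k_1\equiv k_2$ modulo $p$, and then modulo $p-1$ after the $(p-1)$-power trick. Finally, part (2) is exactly this independence statement: together with part (1), it shows that distinct classes $M^{(\alpha)}$ intersect trivially and that their sum is all of $M^p$, so $M^p=\bigoplus_\alpha M^{(\alpha)}$.
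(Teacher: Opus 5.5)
The paper gives no proof of this proposition; it quotes it from Serre and Swinnerton-Dyer. Your overall plan is the standard one, and your reading of the ideal as $\langle\overline{A}-1\rangle$ is right. The gap is at ``Equivalently''. Your reduction of (3) needs that nonzero mod-$p$ forms $f_1,\dots,f_r$, with weights in $r$ distinct classes modulo $p-1$, have nonzero sum. What you go on to argue is only the case $r=2$, and that case does not imply the general one. For $p\ge 7$ there are $(p-1)/2\ge 3$ even weight classes, and nothing you say excludes $f_1+f_2+f_3\equiv 0\pmod p$. Your $(p-1)$-power trick settles $r=2$ on its own: if $\tilde f=\tilde g$ with $k_1\ge k_2$, single-weight injectivity gives $f^{p-1}=\overline{A}^{\,k_1-k_2}g^{p-1}$, and comparing valuations at a simple irreducible factor of $\overline{A}$ gives $(p-1)\mid k_1-k_2$. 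So the detour through $\vartheta$ and a congruence modulo $p$ is unnecessary. But the trick does not pass through sums. The same gap affects (2), since pairwise trivial intersections of the $M^{(\alpha)}$ do not make their sum direct. Only for $p=5$, with just two even classes, is the two-form case enough.

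The missing idea is the irreducibility of $\overline{A}-1$ in $\F_p[Q,R]$, combined with a dimension count. Give a new variable $T$ weight $1$ and homogenize. A nontrivial factorization of $\overline{A}-1$ then becomes one of $T^{p-1}-\overline{A}$ in $\F_p[Q,R][T]$ with both factors of positive $T$-degree (the polynomial is monic in $T$), hence a factorization over $K=\F_p(Q,R)$. By Capelli's theorem this forces $\overline{A}\in K^{\ell}$ for a prime $\ell\mid p-1$, or $\overline{A}\in -4K^4$. Both are impossible for a nonconstant squarefree element of the UFD $\F_p[Q,R]$. Next, the kernel of $\F_p[Q,R]\to\F_p[[q]]$ is prime (the target is a domain). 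It has height one, because the image contains the nonconstant, hence transcendental, series $\Delta\equiv q+\cdots$. So the kernel is principal, generated by an irreducible divisor of $\overline{A}-1$, i.e.\ it equals $\langle\overline{A}-1\rangle$. Part (2) follows because $\overline{A}-1$ is homogeneous of degree $0$ for the grading by weight modulo $p-1$.

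Separately, your route to squarefreeness is not yet an argument. Knowing that the zeros of $\overline{A}$ are the supersingular $j$-values says nothing about $\overline{B}$ there, and a resultant cannot be computed uniformly in $p$. The elementary way is to also use $\vartheta E_2=(E_2^2-E_4)/12$, which in your normalization gives $\partial\overline{B}=-\frac{1}{12}Q\overline{A}$. Suppose $\pi^n$ exactly divides $\overline{A}$ with $\pi$ irreducible (note $n<p$).
\begin{enumerate}
\item Then $\partial\pi\neq 0$: among irreducible isobaric polynomials only scalar multiples of $Q^3-R^2$ are killed by $\partial$, and these cannot divide $\overline{A}$ since $\overline{A}(E_4,E_6)\equiv 1$.
\item Also $\pi\nmid\partial\pi$, since there are no nonzero polynomials of weight $2$.
\item Hence $\pi^{n-1}$ exactly divides $\overline{B}=12\,\partial\overline{A}$.
\item If $n\ge 2$, then $\pi^{n-2}$ exactly divides $\partial\overline{B}$, contradicting $\pi^n\mid Q\overline{A}$.
\end{enumerate}
Thus $n=1$ and $\pi\nmid\overline{B}$.
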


\subsection{Some facts on primes}

This section lays out elementary results from the analytic study of prime numbers. We do not even require the full strength of the prime number theorem for our purposes, and so we state these results quickly and without much proof. These two lemmas may be seen as consequences, for instance, of work by Rosser and Schoenfeld \cite{RosserSchoenfeld}.

\begin{lemma} \label{L: Rosser's Theorem}
    For $n \geq 1$, we have $p_n > n \log(n)$.
\end{lemma}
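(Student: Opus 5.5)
This is Rosser's theorem: for every $n \geq 1$ the $n$th prime satisfies $p_n > n\log(n)$. The plan is to reduce it to a lower bound for the prime counting function. Since $\pi(p_n) = n$, it suffices to show that $\pi(x) < n$ whenever $x \leq n\log(n)$. Equivalently, we want to show that the number of primes up to $x_n := n\log n$ is strictly less than $n$, which forces the $n$th prime to exceed $x_n$.

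For the analytic input I would invoke the explicit Chebyshev-type estimates of Rosser and Schoenfeld \cite{RosserSchoenfeld}. Concretely, I would use the upper bound
\[
\pi(x) < \frac{x}{\log x}\left(1 + \frac{3}{2\log x}\right), \qquad x > 1,
\]
and substitute $x = n\log n$. Then $\log x = \log n + \log\log n$, and the right-hand side becomes
\[
\frac{n\log n}{\log n + \log\log n}\left(1 + \frac{3}{2(\log n + \log\log n)}\right).
\]
This is less than $n$ precisely when
\[
\frac{3}{2}\cdot\frac{\log n}{\log n + \log\log n} < \log\log n \cdot \frac{\log n + \log\log n}{\log n}\cdot\frac{\log n}{\log n+\log\log n}\cdot(\log n+\log\log n)\big/\log n ,
\]
which after clearing denominators reduces to roughly $\tfrac{3}{2}\log n < \log\log n\,(\log n + \log\log n)$. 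This inequality holds once $\log\log n > 3/2$, i.e.\ for $n \geq e^{e^{1.5}} \approx 88$.

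The main obstacle is the finite range $n < 90$ or so, where the asymptotic inequality fails. I would dispatch it by direct comparison against a table of primes. For $n=1$ the claim reads $p_1 = 2 > 0$. For small $n$ we check directly, for example $p_2 = 3 > 2\log 2 \approx 1.39$ and $p_{10} = 29 > 10\log 10 \approx 23.0$. The margin $p_n - n\log n$ stays comfortably positive throughout this range, so the check is routine.

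Alternatively, since the paper states that it only needs these results ``quickly and without much proof,'' one may simply cite Rosser's 1939 theorem as it appears in \cite{RosserSchoenfeld}, Corollary of Theorem 3. I would present the reduction above for completeness and cite the explicit $\pi(x)$ bound as the single external input.
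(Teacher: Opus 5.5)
Your proposal is correct and matches the paper, which gives no argument beyond citing Rosser--Schoenfeld. Your reduction via $\pi(n\log n)<n$ and their bound $\pi(x)<\frac{x}{\log x}\bigl(1+\frac{3}{2\log x}\bigr)$, together with a finite check for $n\le 88$, is a sound way to unpack that citation.

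One small fix: your displayed intermediate inequality is garbled. As written it simplifies to $\frac{3}{2}(\log n)^2<\log\log n\,(\log n+\log\log n)^2$, which is not the right condition. The correct reduction is $\frac{3\log n}{2(\log n+\log\log n)}<\log\log n$, which is exactly (not roughly) your final inequality $\frac{3}{2}\log n<\log\log n\,(\log n+\log\log n)$.
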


\begin{lemma} \label{L: Bertrand}
    For $n \geq 3$, we have $p_n < 2n \log(n)$.
\end{lemma}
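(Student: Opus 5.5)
The plan is to derive the bound from the classical explicit upper bound for the $n$th prime due to Rosser and Schoenfeld \cite{RosserSchoenfeld}, namely
\begin{align*}
    p_n < n\lp \log(n) + \log\log(n) \rp \qquad \text{for all } n \geq 6.
\end{align*}
As with Lemma \ref{L: Rosser's Theorem}, I take this as the analytic input rather than reproving it. If one prefers a more self-contained route, a Chebyshev-type lower bound such as $\pi(x) > \frac{x}{2\log(x)}$ for $x$ moderately large can be inverted instead. I regard that inversion as the only genuinely delicate step, and I would avoid it by quoting the bound above directly.

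Given the displayed inequality, the step for $n \geq 6$ is immediate. Since $\log(n) < n$ for every $n \geq 1$ and $\log$ is increasing, we get $\log\log(n) < \log(n)$ whenever $\log(n) > 0$. Hence $\log(n) + \log\log(n) < 2\log(n)$, and so $p_n < 2n\log(n)$ for all $n \geq 6$.

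The remaining cases $n = 3, 4, 5$ I would check by hand:
\begin{itemize}
    \item $p_3 = 5 < 6\log 3 \approx 6.59$;
    \item $p_4 = 7 < 8\log 4 \approx 11.09$;
    \item $p_5 = 11 < 10\log 5 \approx 16.09$.
\end{itemize}
The hypothesis $n \geq 3$ is genuinely needed. The bound fails at $n=1$, where $2\cdot 1\cdot\log 1 = 0$, and at $n=2$, where $4\log 2 \approx 2.77 < 3 = p_2$.
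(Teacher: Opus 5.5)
Your proposal is correct and matches the paper's approach: the paper gives no argument of its own and simply attributes the lemma to Rosser and Schoenfeld. You go further by making the deduction explicit, using their bound $p_n < n(\log n + \log\log n)$ for $n \geq 6$ together with the hand check of $n = 3, 4, 5$, a step the paper leaves implicit.
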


Because we have mentioned the central importance the Chinese Remainder Theorem will play in our argument, we will need to multiply together many primes. The next lemma works out a result in this direction using Rosser's theorem.

\begin{definition}
    For integers $n \geq 2$, we let $P_{11}(n) := 11 \cdot 13 \cdot \dots \cdot p_n = \prod_{j=5}^n p_j$.
\end{definition}

One may ask why we do not consider the product over all primes. Our choice is one of simple convenience; this choice does not much change our final result, and this choice avoids technicalities which emerge if one of the lower-weight Eisenstein series $E_2, E_4$, or $E_6$ happens to be 1 modulo $p$.

We prove the following lower bound on $P_{11}(n)$.

\begin{lemma} \label{L: Product bound}
    For $n \geq 2$, we have 
    \begin{align*}
        P_{11}(n) > \lp n \log(n) \rp^n e^{-2n-2}.
    \end{align*}
\end{lemma}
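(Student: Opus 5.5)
The plan is to take logarithms and compare the sum $\sum_{j=5}^n \log p_j$ with $n\log n + n\log\log n - 2n - 2$. For small $n$ I will check directly. For $n\le 4$ the product $P_{11}(n)$ is empty and equals $1$. One then verifies that $(n\log n)^n e^{-2n-2} < 1$ for $n=2,3,4$; for instance, at $n=4$ this quantity is about $(5.55)^4 e^{-10}\approx 0.04$. The same kind of direct check, with explicit primes, will cover the range of moderate $n$ where the asymptotic estimates below are not yet sharp enough.

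For general $n\ge 5$ I will invoke Lemma \ref{L: Rosser's Theorem}, which gives $p_j > j\log j$. Hence
\begin{align*}
\log P_{11}(n) > \sum_{j=5}^n \log j + \sum_{j=5}^n \log\log j .
\end{align*}
For the first sum I use the integral comparison $\log n! \ge n\log n - n + 1$. Subtracting $\log 24$ gives
\begin{align*}
\sum_{j=5}^n \log j \ge n\log n - n + 1 - \log 24 .
\end{align*}

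For the second sum I split at $\sqrt n$. Each term is positive for $j\ge 5$, so I simply drop the terms with $j\le \sqrt n$. For $j>\sqrt n$ we have $\log j > \tfrac12\log n$, hence $\log\log j > \log\log n - \log 2$. There are at least $n-\sqrt n-1$ such $j$, which gives
\begin{align*}
\sum_{j=5}^n \log\log j \ge n\log\log n - n\log 2 - (\sqrt n+1)\log\log n .
\end{align*}

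It remains to show that the combined deficit satisfies
\begin{align*}
n + n\log 2 + (\sqrt n + 1)\log\log n + \log 24 - 1 \le 2n + 2 .
\end{align*}
This amounts to $(\sqrt n+1)\log\log n \le (1-\log 2)n + 3 - \log 24$. Since $1-\log 2\approx 0.307$ and $\log\log n/\sqrt n$ is small, this holds for all $n$ beyond a modest explicit threshold (roughly $n\ge 30$ or so). Below that threshold I will verify the inequality numerically using the actual primes; this is a finite check.

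The main obstacle is only bookkeeping of constants. The splitting point and the crude bound $\log\log j>\log\log n-\log 2$ cost a factor $2^n$, which must be absorbed into the slack $e^{-2n}$. If the threshold turns out awkwardly large, I would instead compare $\sum\log\log j$ with $\int_4^n \log\log t\,dt = [t\log\log t - \li(t)]_4^n$ and use $\li(n)\le 2n/\log n$ for moderate $n$. This gives a deficit of only $o(n)$ and makes the finite check very short.
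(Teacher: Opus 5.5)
Your argument is correct, and its skeleton is the paper's: take logarithms, apply Lemma \ref{L: Rosser's Theorem} termwise, and bound $\sum_{j=5}^n \log j$ and $\sum_{j=5}^n \log\log j$ from below. The difference is in how you handle the second sum.

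The paper compares it directly with $\int_5^n \log\log x\,dx = n\log\log n - \li(n) - 5\log\log 5 + \li(5)$ and uses only $\li(n) \le n$. Together with $\int_5^n \log x\,dx$, this costs exactly $2n$ plus the constant $5 - 5\log 5 - 5\log\log 5 + \li(5) \approx -1.79 > -2$. So the bound holds for every $n \geq 5$ with no numerical range at all.

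Your split at $\sqrt n$ costs $n\log 2 + (\sqrt n + 1)\log\log n$ instead. This is asymptotically smaller than $n$, but it only fits inside the available slack once $n \geq 21$. Your count ``at least $n-\sqrt n-1$'' and the positivity of $\log\log n - \log 2$ also both need $n$ at least about $9$. So you genuinely need the direct check for $5 \le n \le 20$. That check succeeds comfortably; for example, at $n=20$ one has $\log P_{11}(20) \approx 56.2$ against a right-hand side of about $39.9$.

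Your fallback is exactly the paper's route, and there you do not need $\li(n) \le 2n/\log n$. Using $\int_4^n$, the crude bound $\li(n)\le n$, and your first-sum bound $n\log n - n + 1 - \log 24$, the constant becomes $1 - \log 24 - 4\log\log 4 + \li(4) \approx -0.52 > -2$. This removes the finite check entirely.

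In return, your proposal treats $n = 2,3,4$ (empty product) explicitly, which the paper's integral argument does not address.
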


\begin{proof}
    Using Lemma \ref{L: Rosser's Theorem}, we have
    \begin{align*}
        \log P_{11}(n) = \sum_{j=5}^n \log(p_j) > \sum_{j=5}^n \log\lp j \log(j) \rp = \sum_{j=5}^n \log(j) + \sum_{j=5}^n \log(\log(j)).
    \end{align*}
    We now apply several Riemann sum estimates. We note that
    \begin{align*}
        \sum_{j=5}^n \log(j) \geq \int_5^n \log(x)dx = n \log(n) - n + 5 - 5\log(5),
    \end{align*}
    and likewise
    \begin{align*}
        \sum_{j=3}^n \log\log(j) \geq \int_3^n \log\log(x) dx = n \log\log(n) - \li(n) - 5\log\log(5) + \li(5),
    \end{align*}
    where $\li(x)$ is the logarithmic integral
    \begin{align*}
        \li(x) = \int_0^x \dfrac{dt}{\log(t)}.
    \end{align*}
    Using these and other trivial estimates, for instance that $\li(x) \leq x$, we have
    \begin{align*}
        \log P_{11}(n) &> n \log(n) - n + 5 - 5\log(5) + n \log\log(n) - \li(n) - 5\log\log(5) + \li(5) \\
        &> n \log(n) + n \log\log(n) - 2n - 2.
    \end{align*}
    Thus, we conclude by exponentiation that
    \begin{align*}
        P_{11}(n) > \lp n \log(n) \rp^n e^{-2n-2},
    \end{align*}
    which completes the proof.
\end{proof}

\section{Bounds on coefficients of quasimodular forms} \label{Sec: Bounds}

Our method for Sturm bounds will utilize the philosophy that if you determine a finite objects structure modulo sufficiently many primes, you determine its structure over $\Z$. However, the objects with which we are concerned have $q$-coefficients that grow polynomially with $n$, and so ``finitely many" primes depends on how many coefficients are considered. It is therefore of key importance to give explicit constraints on how these coefficients grow. In this section, we use elementary results on convolutions and divisor sums to provide explicit upper bounds which are valid for any product of low-weight Eisenstein series. 

\begin{lemma} \label{L: Convoluion bounds}
    Let $a_n, b_n$ be two sequences with $a_0 = b_0 = 1$, $|a_n| \leq A n^\alpha (1+\log(n))^\gamma$, and $|b_n| \leq B n^\beta \lp 1 + \log(n) \rp^\delta$ for $1 \leq n \leq N$ and $\alpha,\beta,\gamma>0$ integers. Then if $c_n$ is the convolution of $a_n$ and $b_n$, we have
    \begin{align*}
        |c_n| \leq A n^\alpha (1+\log(n))^\gamma + B n^\beta \lp 1 + \log(n) \rp^\delta + \dfrac{AB \alpha! \beta!}{\lp\alpha+\beta+1\rp!} n^{\alpha+\beta+1} (1+\log(n))^{\gamma+\delta}.
    \end{align*}
\end{lemma}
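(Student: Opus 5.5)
The plan is to split the convolution $c_n = \sum_{j=0}^n a_j b_{n-j}$ into its two boundary terms and its interior terms. Since $a_0 = b_0 = 1$, the boundary terms are $a_n b_0 = a_n$ and $a_0 b_n = b_n$. Their absolute values are bounded directly by the hypotheses, giving $A n^\alpha (1+\log(n))^\gamma$ and $B n^\beta (1+\log(n))^\delta$; these are the first two terms of the claimed bound. Everything then reduces to showing
\begin{align*}
    \sum_{j=1}^{n-1} |a_j||b_{n-j}| \leq \dfrac{AB\,\alpha!\,\beta!}{(\alpha+\beta+1)!} n^{\alpha+\beta+1}(1+\log(n))^{\gamma+\delta},
\end{align*}
and we take $1 \leq n \leq N$ throughout, so the hypotheses apply to every index involved.

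For the interior sum, apply the hypotheses termwise. Since $1 \leq j, n-j \leq n$ and $\log$ is increasing, we have $(1+\log(j))^\gamma (1+\log(n-j))^\delta \leq (1+\log(n))^{\gamma+\delta}$. This bounds the interior sum by
\begin{align*}
    AB(1+\log(n))^{\gamma+\delta} \sum_{j=1}^{n-1} j^\alpha (n-j)^\beta .
\end{align*}
The key step is to compare this sum with the Beta integral
\begin{align*}
    \int_0^n x^\alpha (n-x)^\beta\,dx = n^{\alpha+\beta+1} \dfrac{\alpha!\,\beta!}{(\alpha+\beta+1)!}.
\end{align*}

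I expect this Riemann-sum comparison to be the main obstacle. The function $x^\alpha(n-x)^\beta$ is unimodal on $[0,n]$ rather than monotone, so one cannot simply say the sum is below the integral. The approach I would try first is to note that the summand vanishes at $j=0$ and $j=n$, so the full sum $\sum_{j=0}^{n} j^\alpha (n-j)^\beta$ equals the interior sum. I would then split at the maximizer $x_0 = \alpha n/(\alpha+\beta)$. On the increasing part, $f(j) \leq \int_j^{j+1} f$, and on the decreasing part, $f(j) \leq \int_{j-1}^{j} f$, giving the integral plus at most one extra copy of $\max f$ coming from the peak.

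If that extra term cannot be absorbed, I would instead use the Beta-function identity for the discrete sum itself. There is the exact identity
\begin{align*}
    \sum_{j=0}^{n} \binom{j}{\alpha}\binom{n-j}{\beta} = \binom{n+1}{\alpha+\beta+1},
\end{align*}
together with the elementary inequality $j^\alpha \leq \alpha!\binom{j+\alpha-1}{\alpha}$ (that is, using rising factorials). This yields a bound of the same shape with $n$ replaced by something like $n+\alpha+\beta$.

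If exactness fails, the fallback is to absorb the peak term $\max f \leq n^{\alpha+\beta}$ into the slack available in the boundary terms, or to accept a harmless adjustment of the constant. The paper likely only needs the bound up to such constants in later sections, so a small loss of this kind should not matter downstream.
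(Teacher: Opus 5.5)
Your reduction is the same as the paper's proof: peel off the $j=0$ and $j=n$ terms, bound $(1+\log j)^\gamma(1+\log(n-j))^\delta$ by $(1+\log n)^{\gamma+\delta}$, and compare $\sum_j j^\alpha(n-j)^\beta$ with the Beta integral. The paper then simply asserts $\frac1n\sum_{k=0}^n (k/n)^\alpha(1-k/n)^\beta\le\int_0^1 x^\alpha(1-x)^\beta\,dx$. That is exactly the step you flagged, and it is false: for $\alpha=\beta=3$, $n=2$ the left side is $1/128$ and the right side is $3!\,3!/7!=1/140$.

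Worse, \emph{the lemma itself is false as stated}, so none of your routes can produce the exact bound. Take $A=B=1$, $a_j=j^\alpha(1+\log j)^\gamma$ and $b_j=j^\beta(1+\log j)^\delta$ for $j\ge 1$, and $n=2$. Then $c_2=a_2+b_2+a_1b_1=a_2+b_2+1$, while the claimed bound is
\[
a_2+b_2+\frac{2^{\alpha+\beta+1}\,\alpha!\,\beta!}{(\alpha+\beta+1)!}\,(1+\log 2)^{\gamma+\delta}.
\]
For $\alpha=\beta=m$ the coefficient $2^{2m+1}(m!)^2/(2m+1)!$ is asymptotic to $\sqrt{\pi/m}$, so for fixed $\gamma+\delta$ the inequality fails once $m$ is large. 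For instance, $\alpha=\beta=30$, $\gamma=\delta=1$ gives $0.3196\ldots\times(1.6931\ldots)^2\approx 0.916<1$. This is precisely the mechanism you anticipated: when $\alpha,\beta$ are large relative to $n$, a lattice point sits at the peak and the sum beats the integral.

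So what your proposal can honestly deliver is a corrected lemma, and you should state it as such. Your first route is sound. With $f(x)=x^\alpha(n-x)^\beta$, unimodality gives $\sum_{j=1}^{n-1}f(j)\le\int_0^n f+\max_{[0,n]}f$, and $\max f=\frac{\alpha^\alpha\beta^\beta}{(\alpha+\beta)^{\alpha+\beta}}n^{\alpha+\beta}\le n^{\alpha+\beta}$. This yields the stated inequality plus an extra term $AB\,n^{\alpha+\beta}(1+\log n)^{\gamma+\delta}$. The rising-factorial/Chu--Vandermonde route also gives a valid bound, with $n^{\alpha+\beta+1}$ replaced by $(n-1)n(n+1)\cdots(n+\alpha+\beta-1)$.

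The fallback that does not work is absorbing the peak term into the boundary terms. Those terms are exactly the hypothesised bounds on $|a_n|$ and $|b_n|$, which are attained in the example above, so there is no slack. Moreover, the peak term carries the factor $AB$ rather than $A$ or $B$.

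Finally, the claim that the loss is harmless downstream needs an actual check. The extra term adds a bounded multiple of $C_{\rm max}(2k-2)$ (resp.\ $C_{\rm max}(2k-4)$, $C_{\rm max}(2k-6)$) at each step of the recursion in Lemma~\ref{L: Eisenstein product bound}. The constant $109\cdot 2^{2k-1}$ there therefore has to be recomputed; it stays exponential in $k$.
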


\begin{proof}
    We have for $1 \leq n \leq N$, using an integral comparison, that
    \begin{align*}
        |c_n| \leq \sum_{k=0}^n |a_n b_{k-n}| &\leq |a_n| + |b_n| + AB \sum_{k=0}^n k^{\alpha} (n-k)^\beta \lp 1 + \log(j) \rp^{\gamma} \lp 1 + \log(n-j) \rp^{\delta} \\ &\leq |a_n| + |b_n| + AB n^{\alpha + \beta + 1} (1+\log(n))^{\gamma+\delta} \cdot \dfrac{1}{n} \sum_{k=0}^n \lp\dfrac{k}{n}\rp^\alpha \lp 1-\dfrac{k}{n}\rp^\beta \\ &\leq |a_n| + |b_n| + AB n^{\alpha+\beta+1} (1+\log(n))^{\gamma+\delta} \int_0^1 x^\alpha (1 - x)^\beta dx.
    \end{align*}
    Recall that this integral is precisely the beta function
    \begin{align} \label{Eq: Beta Function}
        B(\alpha,\beta) := \int_0^1 x^{\alpha-1}\lp 1 - x \rp^{\beta-1}dx = \dfrac{\Gamma(\alpha)\Gamma(\beta)}{\Gamma(\alpha+\beta)},
    \end{align}
    where $\Gamma(z)$ is the Euler Gamma function. We therefore obtain
    \begin{align*}
        |c_n| &\leq |a_n| + |b_n| + \dfrac{AB \alpha! \beta!}{\lp\alpha+\beta+1\rp!}  n^{\alpha+\beta+1} (1+\log(n))^{\gamma+\delta} \\ &\leq A n^\alpha (1+\log(n))^{\gamma} + B n^\beta (1+\log(n))^{\delta} + \dfrac{AB \alpha! \beta!}{\lp\alpha+\beta+1\rp!} n^{\alpha+\beta+1} (1+\log(n))^{\gamma+\delta},
    \end{align*}
    which completes the proof.
\end{proof}

\begin{lemma} \label{L: Eisenstein product bound}
    Let the sequence $s_{a,b,c}(n)$ be defined by
    \begin{align*}
        \sum_{n \geq 0} s_{a,b,c}(n) q^n = E_2^a E_4^b E_6^c.
    \end{align*}
    Then we have the bounds
    \begin{align*}
        |s_{a,b,c}(n)| \leq 109 \cdot 2^{2k-1} n^{2k-1} \lp 1 + \log(n) \rp^k.
    \end{align*}
\end{lemma}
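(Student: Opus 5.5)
The plan is to treat $E_2^aE_4^bE_6^c$ (with $2a+4b+6c=2k$, so $a+b+c\le k$) as an iterated convolution of $a+b+c$ single Eisenstein series, and to bound its coefficients by induction using Lemma \ref{L: Convoluion bounds}. The first step is a uniform bound for each factor. Since $\sigma_\ell(n)\le n^\ell\sigma_0(n)\le n^{\ell}(1+\log n)\cdot C$, and more crudely $\sigma_1(n)\le n(1+\log n)$, $\sigma_3(n)\le \zeta(3)n^3$, $\sigma_5(n)\le\zeta(5)n^5$, I will record
\begin{align*}
|a_{E_2}(n)|\le 24\, n(1+\log n),\quad |a_{E_4}(n)|\le 240\zeta(3)\, n^3,\quad |a_{E_6}(n)|\le 504\zeta(5)\, n^5 .
\end{align*}
Each of these has the shape $A_i n^{w_i-1}(1+\log n)$ with $w_i\in\{2,4,6\}$ the weight, and with constants all at most $109\cdot 2^{w_i-1}$ (e.g. $24\le 109\cdot 2$, $240\zeta(3)\approx 288.5\le 109\cdot 8$, $504\zeta(5)\approx 522.6\le 109\cdot 32$). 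This is presumably where the constant $109$ comes from: it should be chosen so that the base cases fit the normalized form $109\cdot 2^{w-1}n^{w-1}(1+\log n)^{w/2}$, noting $(1+\log n)\le (1+\log n)^{w/2}$.

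The second step is the inductive claim: if $F$ of weight $w$ has $|a_F(n)|\le 109\cdot 2^{w-1}n^{w-1}(1+\log n)^{w/2}$ and $G$ of weight $v$ satisfies the analogous bound, then $FG$ satisfies it with $w+v$. By Lemma \ref{L: Convoluion bounds} with $\alpha=w-1$, $\beta=v-1$, the new bound is
\begin{align*}
A n^{w-1}L^{w/2}+Bn^{v-1}L^{v/2}+\frac{AB\,(w-1)!(v-1)!}{(w+v-1)!}\,n^{w+v-1}L^{(w+v)/2},
\end{align*}
with $L=1+\log n$. The first two terms are trivially dominated by $n^{w+v-1}L^{(w+v)/2}$ times their constants, so I need $A+B+AB\frac{(w-1)!(v-1)!}{(w+v-1)!}\le 109\cdot 2^{w+v-1}$ with $A=109\cdot2^{w-1}$, $B=109\cdot 2^{v-1}$. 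Dividing, this becomes $2^{-v}+2^{-w}+\frac{109}{2}\frac{(w-1)!(v-1)!}{(w+v-1)!}\le 1$.

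The main obstacle is verifying this inequality. Since $v\ge2$ always (every factor has weight at least 2), the beta factor $\frac{(w-1)!(v-1)!}{(w+v-1)!}=\frac{1}{(w+v-1)\binom{w+v-2}{w-1}}$ is small once $w+v$ is moderately large, but in the smallest cases, e.g. $w=v=2$, it equals $1/6$ and the inequality fails. So I will build the product one factor at a time, always multiplying an accumulated form $F$ by a single Eisenstein factor $G$ of weight $v\in\{2,4,6\}$. For those $G$ I will use the sharper actual constants ($24$, $\approx289$, $\approx523$ rather than $109\cdot2^{v-1}$), and handle the few smallest accumulated weights by direct check or by slack in the constants. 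If the fixed constant $109$ still proves too tight in a small case, I will fall back on verifying the first few products numerically, which is a finite check, and run the induction only from there on. Finally, I take total weight $w=2k$ and use $n^{2k-1}$ and exponent $(1+\log n)^k$ (note $L^{(w)/2}=L^k$) to get the statement, with the case $n=0$ trivial since $s_{a,b,c}(0)=1$.
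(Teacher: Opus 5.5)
Your proposal is correct and is essentially the paper's argument: induction on the weight, peeling off one factor $E_2$, $E_4$ or $E_6$ with its true constant ($24$, $240\zeta(3)$, $504\zeta(5)$) via Lemma \ref{L: Convoluion bounds} against the normalized bound $109\cdot 2^{w-1}n^{w-1}(1+\log n)^{w/2}$ for the remaining product. Your aside $\sigma_0(n)\le C(1+\log n)$ is false, but you never use it. The only real difference is in the low-weight base cases, which the paper handles with Ramanujan's identities for $E_2^2$ and $E_2E_4$. Your direct-convolution fallback also works, because with true constants for the peeled factor the normalized inequality fails only at accumulated weight $w=2$, where $F=E_2$. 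There, using $E_2$'s own constant $24$ gives, for example, $|s_{2,0,0}(n)|\le 48n(1+\log n)+96n^3(1+\log n)^2\le 872\,n^3(1+\log n)^2$.
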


\begin{proof}
    Let $\zeta(s) := \sum_{n \geq 1} n^{-s}$ be the Riemann zeta function.
    Using the relationship between $E_{2k}$ and divisor sums, as well as an integral comparison in the case of $E_2 = s_{1,0,0}$, it is elementary to derive the bounds
    \begin{align*}
        |s_{1,0,0}(n)| &\leq 24n\lp 1 + \log(n) \rp, \\
        |s_{0,1,0}(n)| &\leq 240\zeta(3) n^3, \\
        |s_{0,0,1}(n)| &\leq 504\zeta(5) n^5.
    \end{align*}
    In order to use induction, which is our desire, we need to enumerate all bounds up to weight 6; this leaves us to derive bounds for $E_2^2, E_2^3$, and $E_2 E_4$. Now, using Ramanujan's famous differential identities
    \begin{align} \label{Eq: Derivative IDs}
        DE_2 = \dfrac{E_2^2 - E_4}{12}, \ \ \ DE_4 = \dfrac{E_2 E_4 - E_6}{3}, \ \ \ D := q \dfrac{d}{dq},
    \end{align}
    we obtain identities
    \begin{align*}
        s_{2,0,0}(n) &= 240 \sigma_3(n) - 288 n\sigma_1(n), \\
        s_{1,1,0}(n) &= 720 n\sigma_3(n) - 504 \sigma_5(n).
    \end{align*}
    Using the fact that $240n^3 > 288n^2(1+\log(n))$ for $n \geq 2$, we know $s_{2,0,0}(n)$ has positive coefficients beginning at the second, and we may conclude
    \begin{align*}
        |s_{2,0,0}(n)| \leq 240 \zeta(3) n^3.
    \end{align*}
    Similar reasoning shows that $E_2 E_4$ contains only finitely many positive coefficients, and we can conclude after finite computation that
    \begin{align*}
        |s_{1,1,0}(n)| \leq 504 \zeta(5) n^5.
    \end{align*}
    The last base case we must handle is $E_2^3 = E_2 \cdot E_2^2$. We derive thus, from Lemma \ref{L: Convoluion bounds} to conclude
    \begin{align*}
        |s_{3,0,0}(n)| \leq 24n\lp 1 + \log(n) \rp + 240\zeta(3) n^3 + 288\zeta(3) n^5\lp 1 + \log(n) \rp^2 < 576\zeta(3) n^5 \lp 1 + \log(n) \rp^2,
    \end{align*}
    which is valid for $n \geq 1$. This gives, for weights $2,4$, and $6$, the universal bounds
    \begin{align*}
        |s_{a,b,c}(n)| &\leq 24n\lp 1 + \log(n) \rp \ \ \ \text{for all } 2a+4b+6c=2, \\
        |s_{a,b,c}(n)| &\leq 240\zeta(3) n^3 \ \ \ \text{for all } 2a+4b+6c=4, \\
        |s_{a,b,c}(n)| &\leq 576\zeta(3) n^5 \lp 1 + \log(n) \rp^2 \ \ \ \text{for all } 2a+4b+6c=6. 
    \end{align*}
    Observe that the contributors to the maximum bound come from pure powers of $E_2$.

    Our next objective is to take these base cases and establish the explicit upper bound stated in the lemma, namely, that any $a,b,c$ producing a weight $2k$ quasimodular Eisenstein product $E_2^a E_4^b E_6^c$ has coefficients satisfying
    \begin{align*}
        |s_{a,b,c}(n)| \leq C_{\rm max}(2k) n^{2k-1} \lp 1 + \log(n) \rp^k
    \end{align*}
    for $2k \geq 8$.

    To prove this result, any product $E_2^a E_4^b E_6^c$ contains some $a,b,c \not = 0$, and thus each $s_{a,b,c}(n)$ for weight $2k$ is some convolution $(s_{1,0,0} * s_{a-1,b,c})(n)$, $(s_{0,1,0} * s_{a,b-1,c})(n)$, or $(s_{0,0,1} * s_{a,b,c-1})(n)$. We may therefore consider max bounds in these cases only. From the first case, we obtain from Lemma \ref{L: Convoluion bounds} that
    \begin{align} \label{Eq: E_2 bound}
        |s_{a,b,c}(n)| &\leq 24n\lp 1 + \log(n) \rp + C_{\rm max}(2k-2) n^{2k-3} \lp 1 + \log(n) \rp^{k-1} \\ &\hspace{0.5in} + \dfrac{24 C_{\rm max}(2k-2) (2k-3)!}{(2k-1)!} n^{2k-1} \lp 1 + \log(n) \rp^k,
    \end{align}
    from the second case we obtain
    \begin{align} \label{Eq: E_4 bound}
        |s_{a,b,c}(n)| &\leq 240\zeta(3)n^3 + C_{\rm max}(2k-4) n^{2k-5} \lp 1 + \log(n) \rp^{k-2} \\ &\hspace{0.5in} + \dfrac{240\zeta(3) C_{\rm max}(2k-4) (2k-5)!}{(2k-1)!} n^{2k-1} \lp 1 + \log(n) \rp^{k-2},
    \end{align}
    and in the third case by
    \begin{align} \label{Eq: E_6 bound}
        |s_{a,b,c}(n)| &\leq 504\zeta(5)n^5\lp 1 + \log(n) \rp^2 + C_{\rm max}(2k-6) n^{2k-7} \lp 1 + \log(n) \rp^{k-3} \\ &\hspace{0.5in} + \dfrac{504\zeta(5) C_{\rm max}(2k-6) (2k-7)!}{(2k-1)!} n^{2k-1} \lp 1 + \log(n) \rp^{k-1},
    \end{align}

    Each of the three of these must be bounded above by $C_{\rm max}(2k) n^{2k-1} \lp 1 + \log(n) \rp^k$, respectively, since cases with the maximal possible number of powers of $E_2$, $E_4$, or $E_6$, respectively, might be subject only to one of these calculations. Now, from the first derivation, we obtain a required bound
    \begin{align*}
        C_{\rm max}(2k) \geq \dfrac{24}{n^{2k-2} \lp 1 + \log(n) \rp^{k-1}} + \lp \dfrac{1}{n^2\lp 1 + \log(n) \rp} + \dfrac{24 (2k-3)!}{(2k-1)!} \rp \cdot C_{\rm max}(2k-2).
    \end{align*}
    This is a decreasing function of $n$, and so it suffices to substitute $n=1$. After this, we may also note that $\frac{(2k-1)!}{(2k-3)!} \leq \frac{4}{7}$ for $2k \geq 8$, leaving
    \begin{align} \label{Eq: E_2 bound (2)}
        C_{\rm max}(2k) \geq 24 + \dfrac{11}{7} C_{\rm max}(2k-2).
    \end{align}
    We can likewise derive bounds from \eqref{Eq: E_4 bound} and \eqref{Eq: E_6 bound}, which yield
    \begin{align} \label{Eq: E_4 bound (2)}
        C_{\rm max}(2k) \geq 240\zeta(3) + \dfrac{7+2\zeta(3)}{7} C_{\rm max}(2k-4)
    \end{align}
    and
    \begin{align} \label{Eq: E_6 bound (2)}
        C_{\rm max}(2k) \geq 504\zeta(5) + \dfrac{10+\zeta(5)}{10} C_{\rm max}(2k-6).
    \end{align}
    Thus, any determination of $C_{\rm max}(2k)$ that simultaneously satisfies all of \eqref{Eq: E_2 bound (2)}, \eqref{Eq: E_4 bound (2)}, and \eqref{Eq: E_6 bound (2)} will complete the proof. It can be checked that
    \begin{align} \label{Eq: C_max equation}
        C_{\rm max}(2k) = 109 \cdot 2^{2k-1}
    \end{align}
    satisfies these conditions\footnote{The solution $244 (11/7)^{2k-1} - 42$ also works and is smaller; we choose a cleaner expression for convenience and since the choice does not much affect the final result.}.
\end{proof}

We have now focused on a particular basis for $\widetilde{M}_{2k}$ with components having only integer coefficients. However, apart from the case of $\widetilde{M}_2$, this is not a $\Z$-basis; there are forms for instance listed in \eqref{Eq: Derivative IDs} that have integer coefficients but are only $\Q$-combinations in this basis. This is the basis used for Theorem \ref{T: Sturm Q Only}. Therefore, to create a proper $\Z$-basis, one must find a method for identifying all such identities. This will come from the theory of modular forms modulo $p$, which was discussed in Section \ref{S: Modular forms modulo p}. We now apply the classification results in this setting to show how to create $\Z$-basis for $\widetilde{M}_{2k}$, and we show worst-case scenarios for how this affects the order of growth of coefficients of the basis elements.

\begin{proposition} \label{P: Z-basis bound}
    Let $k \geq 10$ be an integer. Then the space $\widetilde{M}_{2k}$ has a $\Z$-basis with the property that for each $f$ in the basis, its Fourier coefficients $a_f(n)$ satisfy the bound
    \begin{align*}
        |a_f(n)| \leq \lp \dfrac{k+6}{6} \rp^{\frac{k^2}{3}} \cdot 109 \cdot 2^{2k-1} n^{2k-1} \lp 1 + \log(n) \rp^k.
    \end{align*}
\end{proposition}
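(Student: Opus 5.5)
We construct the $\Z$-basis of the lattice $L := \widetilde{M}_{2k}^\Z$ by saturating the monomial lattice $L_0 := \bigoplus \Z\, E_2^aE_4^bE_6^c$ (with $2a+4b+6c=2k$). The coefficients of elements of $L$ are then bounded using Lemma \ref{L: Eisenstein product bound} together with a bound on how many times we divide by primes. Let $d = \dim \widetilde{M}_{2k} \le \frac{k^2+k+12}{12}$ by \eqref{Eq: Quasimodular Dimension}. Since the monomials form a $\Q$-basis, $L_0$ has finite index in $L$. We pass from $L_0$ to $L$ by a chain of saturation steps. At each step we find a prime $p$ and an integral combination $g=\sum c_i m_i$ of the current basis vectors, with not all $c_i \equiv 0 \pmod p$, such that $g \equiv 0 \pmod p$. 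We then replace one basis vector $m_j$ (with $p\nmid c_j$) by $g/p$, after first reducing the $c_i$ to $|c_i|<p$ (indeed to $|c_i|\le p/2$). Each such step multiplies the covolume ratio $[L:L_0]$ down by $p$. The new basis vector has coefficients bounded by $\frac{1}{p}\sum_i|c_i|\,\max_i|a_{m_i}(n)| \le \frac{d}{2}\max_i|a_{m_i}(n)|$. So one step inflates the coefficient bound by at most a factor of order $d/2\approx k^2/24$. This is weaker than what is needed, so we must control \emph{which primes occur and how often}.

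The key step is to identify the primes dividing $[L:L_0]$ and the exponent, using Proposition \ref{P: Serre fact}. Consider a prime $p\ge 5$ and a nonzero relation $\sum c_{a,b,c}E_2^aE_4^bE_6^c\equiv 0 \pmod p$. Apply Lemma \ref{L: Quasimodular to modular}, replacing $E_2$ by $E_{p+1}$ and padding with $E_{p-1}$. This yields a relation among modular forms modulo $p$. By part (3) of Proposition \ref{P: Serre fact}, such a relation must come from $\overline{A}(Q,R)$, where $E_{p-1}=A(E_4,E_6)$. Weight considerations then force $p-1 \le 2k$, or more precisely $p+1$ at most about the weight, because otherwise the substituted polynomial in $E_4,E_6,E_{p+1}$ of low degree in $E_{p+1}$ cannot be divisible by $\overline{A}-1$ in the relevant graded sense. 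So only primes $p\le 2k+1$, together with $2$ and $3$, can divide the index. We then bound the $p$-part of the index: the number of independent relations mod $p$ is at most $d$, and iterating mod $p^e$ is bounded similarly. This gives a total count $s$ of saturation steps, and hence a total inflation factor. The target inflation is $\left(\frac{k+6}{6}\right)^{k^2/3}$. This suggests bounding the per-step factor by something like $\frac{k+6}{6}$, which is roughly the maximal number of monomials sharing a fixed power of $E_2$, i.e.\ $\dim M_{2j}+1$. The number of steps would then be bounded by $k^2/3$, which is about $4d$. To get the per-step factor, we perform the saturation one $E_2$-degree layer at a time. Relations mod $p$ can be organized by the filtration by $E_2$-degree, and within a layer the combination involves at most $\dim M_{2k-2j}\le \frac{k+6}{6}$ monomials. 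This replaces the factor $d/2$ by $\frac{k+6}{6}$.

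The main obstacle is this counting step: rigorously bounding the total number of divisions by primes, i.e.\ $\log_p$ of the $p$-part of $[L:L_0]$ summed over $p$, by about $k^2/3$ with a bounded number of terms per step. Our first attempt is to bound $[L:L_0]$ via a determinant. Take $d$ coefficient positions where the monomial matrix is nonsingular. The index divides that $d\times d$ minor, and each prime factor $\ge 5$ contributes at most the multiplicity allowed by Proposition \ref{P: Serre fact}. For $p=2,3$ we use the explicit denominators $12$ and $3$ in \eqref{Eq: Derivative IDs} and the $1728$ in $\Delta$ to bound exponents per layer. Once the number of steps $s\le k^2/3$ and the per-step factor $\le\frac{k+6}{6}$ are established, multiplying by the bound $109\cdot 2^{2k-1}n^{2k-1}(1+\log n)^k$ from Lemma \ref{L: Eisenstein product bound} gives the proposition. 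The hypothesis $k\ge 10$ is used to absorb small-weight exceptional primes and constants.
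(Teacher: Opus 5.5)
You have the same architecture as the paper's proof: saturate $L_0=\langle E_2^aE_4^bE_6^c\rangle_\Z$ inside $L=\widetilde{M}_{2k}^\Z$ by index-decreasing moves, use Proposition \ref{P: Serre fact} to restrict to $p\in\{2,3\}$ and $5\le p\le 2k+1$, then raise a per-move factor to the number of moves. However, the two quantitative inputs you need are missing, and one of them is false.

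\emph{The per-move factor.} Nothing supports the claim that working ``one $E_2$-layer at a time'' gives a factor $\frac{k+6}{6}$. Relations modulo $p$ mix $E_2$-depths, e.g.\ $E_2^2\equiv E_4 \pmod{12}$ and $E_2E_4\equiv E_6 \pmod 5$. So the saturating vectors have coordinates in several layers, and nothing caps the number of basis vectors entering a move below $d$.

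\emph{The move count.} The bound $s\le k^2/3$ cannot be proved. If a move replaces $m_j$ by $g/p$ with $g=\sum_i c_im_i$, the old lattice $S$ and new lattice $S'$ satisfy $[L:S']=[L:S]\,|c_j|/p$. Hence the number of prime factors of the index, counted with multiplicity, drops by at most one per move. Now $L\supseteq\bigoplus_j E_2^jM^\Z_{2k-2j}\supseteq L_0$. Expanding the triangular $\Z$-basis $\Delta^iE_4^{a_i}E_6^{\epsilon}$ of $M^\Z_w$ via $\Delta=(E_4^3-E_6^2)/1728$ gives a triangular matrix against the monomials with diagonal entries $\pm1728^{-i}$. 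Therefore $[M^\Z_w:\langle E_4^bE_6^c\rangle_\Z]=1728^{\binom{\dim M_w}{2}}$. For $k=10$, the weights $20,18,16,12$ each have dimension $2$, so this already gives $1728^4$, i.e.\ $36>k^2/3$ prime factors. In general the count is of order $k^3/24$.

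You were right that the factor $d$ is too weak. The paper reaches $\frac{k+6}{6}$ only by asserting $\dim\widetilde{M}_{2k}\le\frac{k+6}{6}$, which is the bound for $\dim M_{2k}$; in fact $\dim\widetilde{M}_{20}=14$. Its move count also uses relations modulo $p$ in $M_{2k}$ and ignores relations modulo $p^e$. So following the paper would not close your gap.

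The missing idea is to avoid compounding entirely. In monomial coordinates you have $\Z^d=L_0\subseteq L\subseteq\Q^d$ with finite index, and such an $L$ has a basis (essentially its Hermite normal form) with every coordinate in $[0,1]$. To build it:
\begin{enumerate}
\item The last coordinate maps $L$ onto $\frac1h\Z$ for some $h\ge1$, being a finitely generated subgroup of $\Q$ containing $\Z$.
\item Pick $w\in L$ mapping to $\frac1h$, and subtract integer multiples of the standard vectors $e_i\in L_0\subseteq L$ to put its other coordinates in $[0,1)$.
\item Recurse on $L\cap(\Q^{d-1}\times\{0\})\supseteq\Z^{d-1}$.
\end{enumerate}
Each basis element is then $\sum_j c_jm_j$ with $0\le c_j\le 1$. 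By Lemma \ref{L: Eisenstein product bound}, its $n$th coefficient ($n\ge1$) is at most $d\cdot 109\cdot 2^{2k-1}n^{2k-1}(1+\log n)^k$. Since $d\le\frac{k^2+k+12}{12}$ is far smaller than $\left(\frac{k+6}{6}\right)^{k^2/3}$, this proves the proposition outright, with no appeal to Proposition \ref{P: Serre fact} and no counting of primes.
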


\begin{proof}
    By Lemma \ref{L: Eisenstein product bound}, we know that there exists a basis $\mathcal E_{2k} := \{ E_2^a E_4^b E_6^c \}_{2a+4b+6c=2k}$ of $\widetilde{M}_{2k}$ with integer coefficients. The $\Z$-span of this basis does not cover all of $\widetilde{M}_{2k}^\Z$ in general; but since the $\Z$-module generated by this basis has full rank inside of $\widetilde{M}_{2k}^\Z$, it has finite index inside of $\widetilde{M}_{2k}^\Z$ (for more detail, see e.g. \cite[Theorem 1.17]{StewartTall}). Therefore, there exists some finite number of index-decreasing moves, each of which replaces one element of this with another, that creates an integer basis.

    We perform induction. Suppose by way of a finite number of steps, we have reduced the basis $\mathcal E_{2k}$ to a basis $\mathcal S$, which has integer coefficients and a uniform upper bound function $U_{2k}(n)$ on its Fourier coefficients; that is, $U_{2k}(n)$ is chosen such that for each $g \in \mathcal E_{2k}$, the Fourier coefficients $a_g(n)$ of $g$ are such that $|a_g(n)| \leq U_{2k}(n)$. We suppose now that $f \in \widetilde{M}_{2k}^\Z \backslash \langle \mathcal S \rangle_{\Z}$. We wish to modify $\mathcal S$ to include $f$ in the span. By the fullness of rank of $\langle \mathcal S \rangle_{\Z}$ inside $\widetilde{M}_{2k}^\Z$, there is some integer $m$ such that $mf \in \langle \mathcal S \rangle_{\Z}$; thus we write
    \begin{align*}
        mf = \sum_{g \in \mathcal S} K_{g} g, \ \ \ \text{with} \ \ \ K_{g} \in \Z.
    \end{align*}
    Without loss of generality, we may replace $f$ by some modular form equivalent to $f$ modulo $m$ with the property that $0 \leq K_{a,b,c} < m$. Thus, if $U_{2k}(n)$ is a uniform upper bound on the coefficients of the current basis $\mathcal S$, and if we create a new basis $\mathcal S'$ by finding some $K_{g} \not = 0$ and replacing $g$ by $f$, then the coefficients of $f$ satisfy the bound
    \begin{align*}
        |a_f(n)| \leq \dfrac{\sum_{a,b,c} |K_{a,b,c}| U_{2k}(n)}{m} \leq \dim\lp\widetilde{M}_{2k}\rp U_{2k}(n)
    \end{align*}
    since $0 \leq K_{a,b,c} < m$, and so the new basis has an upper bound $\dim\lp\widetilde{M}_{2k}\rp U_{2k}(n)$. The span of the new basis $\mathcal S'$ includes the span of $\mathcal S$ and $f$, and therefore $[\widetilde{M}_{2k}^\Z : \mathcal S'] < [\widetilde{M}_{2k}^\Z : \mathcal S]$.

    We therefore conclude by induction that if we can demonstrate that some number $N$ of moves of the type above can create a $\Z$-basis of $\widetilde{M}_{2k}^\Z$ from $\mathcal E_{2k}$, then this $\Z$-basis has coefficient-wise uniform upper bound $\lp \dim\widetilde{M}_{2k} \rp^N U_{2k}(n)$. We do this by appeal to  Proposition \ref{P: Serre fact} (3), from which we know the full description of the space of modular forms modulo any prime $p \geq 5$. Firstly, note that a consequence here is that there can be no relations modulo $p$ for modular forms of weight $2k < p - 1$, so we may assume now that $p \leq 2k+1$ if any relation exists.

    Appealing to Proposition \ref{P: Serre fact} and the fact that $E_2$ is a modular form modulo $p$, we have an isomorphism
    \begin{align*}
        \widetilde{M}^p_{2k} \cong M^p_{2k} \cong \F_p[Q,R]/\langle \overline{A}(Q,R) \rangle
    \end{align*}
    where $\overline{A}$ is the reduction modulo $p$ of that bivariate homogeneous polynomial of weight $p-1$ (letting $Q$ and $R$ carry weights 4 and 6) such that $A(E_4,E_6) = E_{p-1}$. Since $E_{p-1} \equiv 1 \pmod{p}$, the multiplication-by-$\overline{A}$ map is injective. We in fact obtain a short exact sequence of $\F_p$-algebras
    \begin{align*}
        0 \rightarrow \F_p[Q,R] \xrightarrow{\times \overline{A}(Q,R)} \F_p[Q,R] \xrightarrow{(Q,R) \mapsto (E_4,E_6)} \bigoplus_{k \geq 0} M_{2k}^p \to 0
    \end{align*}
    from which we will compute dimensions of certain vector spaces. In particular, this short exact sequence is also weight-preserving, in the sense that multiplication by $\overline{A}$ and quotients by $\overline{A}$ have predictable impacts on raising and maintaining weights\footnote{We do not discuss the filtration, or lowest possible weight, of a modular form modulo $p$. We only need results on the realizations of certain quasimodular forms in particular weights. It would be of interest to determine whether filtration theory could improve this argument}, respectively. Since these maps all preserve the grading, where we decompose $\F_p[Q,R] = \bigoplus\limits_{d \geq 0} X_d$ for $X_d$ denoting the subspace of weight $d$ when we assign weights $4, 6$ to $Q, R$, respectively, we obtain by Lemma \ref{L: Quasimodular to modular}, for each $d \geq p-1$,
    \begin{align*}
        0 \rightarrow X_{d-(p-1)} \xrightarrow{\times \overline{A}(Q,R)} X_d \xrightarrow{(Q,R) \mapsto (E_4,E_6)} M^p_d \rightarrow 0.
    \end{align*}
    This short exact sequence of vector spaces therefore implies that for $5 \leq p \leq 2k+1$, we have the identity
    \begin{align*}
        \dim X_{2k-(p-1)} = \dim X_{2k} - \dim M_{2k}^p,
    \end{align*}
    and after observing that $\dim X_{2k} = \dim M_{2k}$, we conclude
    \begin{align*}
        \dim M_{2k-(p-1)} = \dim M_{2k} - \dim M_{2k}^p,
    \end{align*}
    which describes the precise number of independent relations modulo $p$ for the basis $\mathcal E_{2k}$ for $\widetilde{M}_{2k}$. Since no relation can exist for $p > 2k+1$, the number $N$ of independent relations over $\Z$ for the basis $\mathcal E_{2k}$ is bounded above by
    \begin{align*}
        N = \sum_{2 \leq p \leq 2k+1} \lp \dim M_{2k} - \dim M_{2k}^p \rp &\leq 2\dim M_{2k} - \dim M_{2k}^2 - \dim M_{2k}^3 + \sum_{5 \leq p \leq 2k+1} \dim M_{2k-(p-1)},
    \end{align*}
    which after we apply the trivial bound $\pi(2k+1) < k$ and apply Theorem \ref{T: Classical Sturm} to observe that $\dim M_{2k} \leq \frac{k+6}{6}$, we obtain
    \begin{align*}
        N &\leq \dfrac{k+6}{3} + \sum_{5 \leq p \leq 2k+1} \dfrac{\lp k - \frac{p-1}{2} \rp + 6}{6} < \dfrac{k+6}{3} + \dfrac{k(2k+13)}{12} < \dfrac{k^2}{3}
    \end{align*}
    for $k \geq 10$.
    
    Since at most $\frac{k^2}{3}$-many $\Z$-relations exist that can be removed from the basis $\mathcal E_{2k}$ to create a $\Z$-basis for $\widetilde{M}_{2k}$, and since each act of removing one of these $\Z$-relations can only increase the upper bound already established on the basis $\mathcal E_{2k}$ be a factor of $\dim \widetilde{M}_{2k} \leq \frac{k+6}{6}$ all $k$, we conclude that $\widetilde{M}_{2k}$ possesses a $\Z$-basis, where for each $f$ in the basis with coefficients $a_f(n)$, we have
    \begin{align*}
        |a_f(n)| \leq \lp \dfrac{k+6}{6} \rp^{\frac{k^2}{3}} \cdot 109 \cdot 2^{2k-1} n^{2k-1} \lp 1 + \log(n) \rp^k,
    \end{align*}
    which completes the proof.
\end{proof}

\section{Some linear algebra} \label{Sec: Linear Algebra}

The central pillar of our main arguments are results coming out of linear algebra that is used to execute reductions in very large, purely hypothetical Sturm bounds down to reasonable sizes. In this section, we give an important proposition which is required to prove Theorems \ref{T: Main} and \ref{T: Sturm Q Only}.

The first result discusses how, given a non-square matrix with more rows than columns and with bounded entries, one can verify that said matrix has full column rank as a matrix over $\Z$ by verifying that it has full column rank as a matrix over $\mathbb{F}_p$ for a finite number of primes $p$. This result will be applied in the case where each column of said matrix represents some number of initial terms of a basis element for $\widetilde{M}_{2k}$.

We begin the section by stating the well known Hadamard bound on determinants, which is a critical lemma for this section.\footnote{The variation of the Hadamard bound using 2-norms of rows could also be used, but seems not to improve the end result by any order of magnitude.}

\begin{lemma}[{\cite[5.6.P10 on p. 363--364]{MatrixReference}}] \label{L: Hadamard}
    Let $N$ be a $n \times n$ square matrix whose entries are bounded by $B$. Then
    \begin{align*}
        \left| \det(N) \right| \leq B^n n^{n/2}.
    \end{align*}
\end{lemma}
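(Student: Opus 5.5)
The plan is to derive the bound from Hadamard's inequality in its row-norm form: the absolute value of the determinant is at most the product of the Euclidean lengths of the rows. Write $N = (n_{ij})_{1 \leq i,j \leq n}$ and let $r_1, \dots, r_n \in \C^n$ be its rows. First I would dispose of the degenerate case: if the rows are linearly dependent, then $\det(N) = 0$ and there is nothing to prove. Otherwise I would apply the Gram--Schmidt process to $r_1, \dots, r_n$, producing orthogonal vectors $r_1^*, \dots, r_n^*$ with $r_i = r_i^* + \sum_{j<i} \mu_{ij} r_j^*$.

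Second, I would observe that $N = U N^*$, where $N^*$ has rows $r_i^*$ and $U$ is lower unitriangular with entries $\mu_{ij}$. Hence $\det(N) = \det(N^*)$. Since the rows of $N^*$ are orthogonal, $N^* (N^*)^{*}$ is diagonal with entries $\|r_i^*\|^2$, so $|\det N|^2 = \prod_i \|r_i^*\|^2$. By Pythagoras, $\|r_i^*\| \leq \|r_i\|$, since $r_i^*$ is the component of $r_i$ orthogonal to the span of $r_1, \dots, r_{i-1}$. Together these give $|\det N| \leq \prod_{i=1}^n \|r_i\|_2$.

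Finally, I would use the entrywise bound $|n_{ij}| \leq B$ to get $\|r_i\|_2 = \bigl(\sum_j |n_{ij}|^2\bigr)^{1/2} \leq (nB^2)^{1/2} = B\sqrt{n}$ for each row. Taking the product over the $n$ rows yields $|\det(N)| \leq B^n n^{n/2}$, as claimed.

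None of these steps is a serious obstacle, since this is the classical argument. The only point needing care is the identity $|\det N^*| = \prod_i \|r_i^*\|$, which I would justify through the Gram matrix as above. The inequality $\|r_i^*\| \leq \|r_i\|$ then follows directly from the orthogonal decomposition of $r_i$.
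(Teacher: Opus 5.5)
Your proof is correct. The paper gives no argument of its own for this lemma; it simply cites Horn--Johnson (5.6.P10), so there is no competing route to compare against.

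Your Gram--Schmidt derivation of $|\det N| \leq \prod_{i=1}^n \|r_i\|_2$, followed by $\|r_i\|_2 \leq B\sqrt{n}$, is the standard proof behind that citation. It makes the lemma self-contained. Its intermediate inequality is exactly the ``2-norms of rows'' form of Hadamard's bound that the paper's footnote mentions as an alternative.

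Treating linearly dependent rows separately is the right call. Otherwise Gram--Schmidt would divide by $\|r_j^*\|^2 = 0$ when forming the $\mu_{ij}$.

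One cosmetic point: writing $N^*$ for the orthogonalized matrix and $(N^*)^{*}$ for its conjugate transpose is confusing notation. A different symbol for the orthogonalized matrix would read more cleanly.
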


We now begin with the first of the two main results of this section. The purpose of this result is to determine an explicit modulus for which a matrix with full rank over $\Z$ has full rank when reduced\footnote{This is not automatic; consider for instance $\lp \begin{smallmatrix} 1 & 2 \\ 2 & 1 \end{smallmatrix} \rp$ modulo 3.}.

\begin{proposition} \label{P: Full Column Rank}
    Let $A$ be an $m \times n$ matrix with integer entries and with $m \geq n$, each bounded above by a constant $B$. Let $M(B,n) := B^n n^{n/2}$, and choose a constant $C > M(B,n)$. Then $A$ has full column rank if and only if $A$ has full column rank modulo $C$.
\end{proposition}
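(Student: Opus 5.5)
The plan is to reduce everything to $n \times n$ minors and compare integer determinants with their residues modulo $C$. I will read ``bounded above by $B$'' as $|a_{ij}| \leq B$. Since $C$ need not be prime, I will read ``full column rank modulo $C$'' in the determinantal sense: some $n \times n$ minor of $A$ is nonzero in $\Z/C\Z$. Over $\Z$, or over $\Q$, full column rank is equivalent to the existence of an $n \times n$ submatrix with nonzero determinant, by the standard rank--minor characterization. So both sides of the equivalence become statements about the same finite list of integers: the determinants of the $\binom{m}{n}$ square submatrices $N$ obtained by choosing $n$ rows of $A$.

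For the direction ``full column rank modulo $C$ implies full column rank'', take a minor $\det(N)$ that is nonzero modulo $C$. Reduction modulo $C$ is a ring homomorphism $\Z \to \Z/C\Z$ and commutes with determinants. Hence the integer $\det(N)$ is not divisible by $C$, so in particular it is nonzero, and $A$ has full column rank over $\Z$. This direction needs no size bound at all.

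For the converse, suppose $A$ has full column rank over $\Z$ and pick an $n \times n$ submatrix $N$ with $\det(N) \neq 0$. Its entries are entries of $A$, so they are bounded in absolute value by $B$. Lemma \ref{L: Hadamard} then gives
\begin{align*}
0 < |\det(N)| \leq B^n n^{n/2} = M(B,n) < C.
\end{align*}
A nonzero integer of absolute value strictly less than $C$ cannot be a multiple of $C$. Therefore $\det(N) \not\equiv 0 \pmod{C}$, which is full column rank modulo $C$.

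I do not expect a real computational obstacle here; the main point needing care is the meaning of ``rank modulo $C$'' for composite $C$. In the later application $C$ is presumably a product of distinct primes such as $P_{11}(n)$, and the paper will want to deduce rank information from reductions modulo the individual primes. I would therefore add a remark covering that setting. If $C = \prod p_j$ is squarefree, the Chinese Remainder Theorem shows that $\det(N) \equiv 0 \pmod{C}$ if and only if $\det(N) \equiv 0 \pmod{p_j}$ for every $j$. The useful version of the equivalence is then: $A$ has full column rank over $\Z$ if and only if, for some single submatrix $N$, $\det(N)$ is nonzero modulo at least one $p_j$.

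This is strictly stronger than knowing that each reduction $A \bmod p_j$ has full column rank over $\F_{p_j}$, since different primes could be witnessed by different submatrices. I would state the proposition in the determinantal form above, so that the proof goes through verbatim, and point this distinction out explicitly.
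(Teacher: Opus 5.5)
Your proof is correct and is essentially the paper's argument. The paper takes the gcd $G$ of all $n\times n$ minors and notes $0<G\le|\det N|\le M(B,n)<C$ whenever some minor $\det N$ is nonzero, so that $C\nmid G \iff G\neq 0$. Since $C\nmid G$ holds exactly when some minor is not divisible by $C$, this is your determinantal reading of ``full column rank modulo $C$''; you make it explicit, while the paper leaves it implicit. Using one nonzero minor instead of $G$ is only a cosmetic simplification. It does sidestep the paper's loose phrase that $G$ is at most \emph{each} minor, which holds only for nonzero minors in absolute value.

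One correction to your closing paragraph: your ``useful version'' is right, but the sentence after it is backwards. For squarefree $C=\prod_j p_j$, having $\det(N)\not\equiv 0 \pmod{C}$ for some $N$ means only that some minor is nonzero modulo \emph{at least one} $p_j$. Equivalently, $A \bmod p_j$ has full column rank over $\F_{p_j}$ for some $j$. Full column rank modulo every $p_j$ implies this, so the determinantal condition is weaker, not stronger. For example, the $1\times 1$ matrix $(p_1)$ satisfies it yet is singular modulo $p_1$. The worry about different primes being witnessed by different submatrices applies only to the much stronger condition that a single minor be coprime to $C$, which the proposition neither needs nor yields. What the forward direction actually gives the later application is this: a nonzero minor of absolute value at most $M(B,n)<C$ cannot be divisible by every $p_j$. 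Hence full column rank over $\Z$ forces full column rank modulo at least one prime $p_j\mid C$.
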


\begin{remark}
    We make the following remarks:
    \begin{enumerate}
        \item This condition is in fact necessary and sufficient, as the proof clarifies; we only utilize one direction of this implication, and therefore only state this direction.
        \item Our proof will use for $C$ the product of many small primes. In principle prime powers are also permissible; the choice of $C$ is mostly a free choice and can be chosen for convenience.
    \end{enumerate}
\end{remark}

\begin{proof}
    In order for an $m \times n$ non-square matrix to have full column rank, it is necessary and sufficient that at least one of its square $n \times n$ minors is invertible. Thus, it has full column rank if and only if the greatest common divisor among all determinants of its $n \times n$ minors is nonzero. Let this greatest common divisor be given by $G$. Firstly, since $G$ must be less than or equal to each particular $n \times n$ minor determinant in $M$, we have by Lemma \ref{L: Hadamard} that
    \begin{align*}
        G \leq B^n n^{n/2} = M(B,n).
    \end{align*}
    Since $C > M(B,n)$, the statement that $G \not \equiv 0 \pmod{C}$ is equivalent to $G \not = 0$. This completes the proof.
\end{proof}

Proposition \ref{P: Full Column Rank} gives explicit information on how to identify that, for some particular basis, a given number of coefficients is sufficient to establish linear independence of that basis. One might wonder whether a variation of this problem, namely of selecting some $f$ and representing $f = \sum c_i f_i$, might give a more useful method. Such results can be derived by analyzing solutions to equations $\vec{c} = A^+ \vec{f}$, where $A^+$ is the Moore-Penrose inverse \cite[7.3.P7 on p. 453]{MatrixReference} of a non-square matrix, if additional data about $f$ is known in advance. Notably, if $f$ can be shown to have substantially smaller coefficients than a typical quasimodular form of that weight (think of cusp forms for instance) then a notable improvement on Sturm-type bounds can be derived. However, this ingredient in the overall proof does not constitute the ``main term" in our derivation of the Sturm bound. We therefore find it most prudent to only indicate that such results can be derived and to proceed with the most general situation only in this work.

\section{Proof of Theorem \ref{T: Main}} \label{Sec: Proof}

In this section, we prove Theorem \ref{T: Main}. Thus, we let $f \in \widetilde{M}_{2k}$ is some quasimodular form with integer coefficients. Let $\mathcal E_{2k} := \{ E_2^a E_4^b E_6^c \}_{2a+4b+6c=2k}$ be the standard basis of $\widetilde{M}_{2k}$. Relabel these forms as $f_1, f_2, \dots, f_d$ for $d = \dim \widetilde{M}_{2k}$ for convenience; this notation is helpful for assigning each quasimodular form to a column of a matrix.

\subsubsection{Stage 1: From mere existence to an explicit bound}

We begin with the mere consideration that there must exist some Sturm bound. This is true because for each $k \geq 1$, each space $\widetilde{M}_{2k}$ is finite-dimensional, and because each element of $\widetilde{M}_{2k}$ is uniquely determined by its Fourier expansion. Therefore, there must be some integer $m$, which we can assume without loss of generality depends only on $k$ (and not on $f$), so that the first $m$ coefficients of any $f \in \widetilde{M}_{2k}$ determine $f$ uniquely. Now, let $f_i = \sum_{n \geq 0} a_i(n) q^n$ denote the Fourier expansion of each of the $\mathcal E_{2k}$-basis elements $f_i$. We choose some $m$ as determined above, and we consider the matrix
\begin{align*}
    A = \begin{pmatrix}
            a_1(1) & a_2(1) & \cdots & a_d(1) \\
            a_1(2) & a_2(2) & \cdots & a_d(2) \\
            a_1(3) & a_2(3) & \cdots & a_d(3) \\
            \vdots & \vdots & \ddots & \vdots \\
            a_1(m) & a_2(m) & \cdots & a_d(m)
        \end{pmatrix}.
\end{align*}
By our choice of $m$, the matrix $A$ has full column rank. The basic idea of the rest of the proof is to show that the combined leveraging of Proposition \ref{P: Z-basis bound}, Proposition \ref{P: Full Column Rank}, and estimates on the growth of primes, we can create a recursive algorithm by which $m$ can be reduced. That is, we create an algorithm which inputs a full rank matrix whose columns are the Fourier coefficients of a basis of $\widetilde{M}_{2k}$, and which then uses modulo $p$ Sturm bounds for modular forms to prove that many of the rows can be removed without violating the full rank condition. This process can be continued until a stable point is reached.

The steps of the proof will now follow two stages: first, an inductive descent procedure will be implemented which gives an initial explicit upper bound as its output. This initial upper bound will be suboptimal, and so the final stage of the proof will continue to use the algorithm more carefully to reach as close to a natural ``stationary point" of the algorithm as can reasonably be obtained.

\subsubsection{Stage 2: Recursive descent}

We begin now the next stage of argument, by which we demonstrate, by a method reminiscent of Fermat's method of infinite descent, after which that we can assume $m$ is smaller than an explicit function of $k$. This will be executed by first assuming some arbitrarily large upper bound on $m$, then demonstrating that Proposition \ref{P: Full Column Rank} can be leveraged to reduce this upper bound. We then apply this procedure as many times as is required in order to arrive at the claimed bound. These steps are now explained below. We will use the repeated exponential function
\begin{align*}
    \cE_k(x) := k^{k^{k^{\dots}}}, \ \ \ \text{exponentiating $x$ times}.
\end{align*}
Using this very rapidly growing function of $k$, we prove the central result of our argument; that extremely large Sturm bounds can actually be used to construct slightly less (but still extremely large) Sturm bounds. This argument will then bring down $m$ from some unknown upper bound to an extremely large, but known, upper bound. The consequence of this methodology is encapsulated in the following proposition.

\begin{proposition} \label{P: Reduction of Bound}
    The first $k^2 k^k k^{k^k}$ Fourier coefficients of $f \in \widetilde{M}_{2k}^{\Z}$ determine that form uniquely.
\end{proposition}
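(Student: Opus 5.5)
We run the descent sketched in Stage 2. Fix the basis $f_1,\dots,f_d$ of $\widetilde{M}_{2k}$; the plan works for either the standard basis $\mathcal E_{2k}$ or the $\Z$-basis of Proposition \ref{P: Z-basis bound}. Suppose the first $m$ coefficients already determine every form, i.e. the $m\times d$ matrix $A$ has full column rank. The goal is to produce a much smaller $m'$ with the same property whenever $m$ is huge.

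\textbf{Step 1: a full-rank submatrix with bounded entries.} Since $A$ has full column rank, some $d\times d$ minor is invertible. Its rows lie in $\{1,\dots,m\}$, and by Lemma \ref{L: Eisenstein product bound} its entries are bounded by
\begin{align*}
B(m)=109\cdot 2^{2k-1}m^{2k-1}(1+\log m)^k .
\end{align*}
By Lemma \ref{L: Hadamard} its determinant $D$ satisfies $0<|D|\le M(B(m),d)=B(m)^d d^{d/2}$.

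\textbf{Step 2: find a prime not dividing $D$.} Let $n$ be minimal with $P_{11}(n)>M(B(m),d)$. Using Lemma \ref{L: Product bound}, $n\log n\approx \log M\approx d(2k-1)\log m$, so $n=O(k^3\log m)$ because $d\le (k^2+k+12)/12$. Then $D$ cannot be divisible by all of $p_5,\dots,p_n$, so there is a prime $p$ with $11\le p\le p_n$ and $p\nmid D$. By Lemma \ref{L: Bertrand}, $p<2n\log n=O(k^3\log m\cdot\log(k\log m))$. For this $p$, the reductions $\bar f_1,\dots,\bar f_d$ are linearly independent over $\F_p$, since the chosen minor is invertible modulo $p$. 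This is exactly Proposition \ref{P: Full Column Rank} applied prime by prime.

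\textbf{Step 3: transfer the classical bound.} By Lemma \ref{L: Quasimodular to modular}, every $\bar f_i$ is the reduction of a modular form of weight $2kp$. Apply Theorem \ref{T: Classical Sturm}(2) with $\Gamma=\SL_2(\Z)$. Any $\F_p$-combination $\sum c_i\bar f_i$ whose first $m':=\lfloor 2kp/12\rfloor+1$ coefficients vanish is $0$ modulo $p$, hence all $c_i=0$. So the first $m'$ rows of $A$ have full rank modulo $p$, hence full rank over $\Q$. Thus the first $m'$ coefficients determine $f$, with
\begin{align*}
m'=O\bigl(k^4\log m\,\log(k\log m)\bigr),
\end{align*}
which is roughly logarithmic in $m$.

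\textbf{Step 4: iterate.} Iterating $m\mapsto m'$ from the unknown initial $m$ strictly decreases $m$ while $m$ exceeds the fixed point. Passing through tower-type values $\cE_k(x)$, each step removes one exponential layer, since $\log\cE_k(x+1)\approx\cE_k(x)\log k$. After finitely many steps we land below $k^2k^kk^{k^k}$, and this is the claimed statement. Running one more step from this value would give something polynomial in $k$, which is presumably the subject of Stage 3.

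\textbf{Main obstacle.} The real difficulty is bookkeeping, not ideas. One must verify explicitly, with the constants $109\cdot 2^{2k-1}$, $d^{d/2}$ and the $e^{-2n-2}$ loss, that whenever $m>k^2k^kk^{k^k}$ we get $m'<m$. Equivalently, one must show that the map $m\mapsto m'$ has no fixed point above that threshold. I would handle this by proving a clean inequality of the form $m'\le c\,k^5(\log m)^2$ for all $m\ge k$, and then checking that it lies below $m$ past the threshold.
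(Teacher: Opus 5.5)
Your proposal is correct and is essentially the paper's argument: a Hadamard-bounded nonzero $d\times d$ minor, primes $11,\dots,p_N$ whose product exceeds that bound so that some $p\le p_N$ does not divide the minor, the mod-$p$ Sturm bound in weight $2kp$ via Lemma \ref{L: Quasimodular to modular}, and descent through the tower $\cE_k(x)$. Your single-good-prime phrasing is the accurate form of how Proposition \ref{P: Full Column Rank} is actually used.

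Two small points remain. First, commit to the standard basis $\mathcal E_{2k}$ rather than leaving the choice open: it suffices because only determination over $\Z$ is claimed, it drops the factor $\left(\frac{k+6}{6}\right)^{k^2/3}$, and it is the setting where Lemma \ref{L: Quasimodular to modular} applies verbatim, since that proof needs $p$-integral $E_2$-components, which $\Z$-basis elements need not have. Second, treat small $k$ separately, as the paper does for $k<10$: for $k=1$ the target $k^2k^kk^{k^k}$ equals $1$, while your descent map always gives $m'\ge\lfloor 22/12\rfloor+1=2$, so there you should simply note $\widetilde{M}_2=\C E_2$.
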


\begin{proof}
    One can check numerically that this is true for $k < 10$ (see e.g. Section \ref{Sec: Conclusion}) and so we assume for the remainder that $k \geq 10$.
    
    Since quasimodular forms are determined uniquely by their Fourier coefficients, there must be some integer $m$ (depending on $k$) so that each quasimodular form of weight $2k$ is determined uniquely from its first $m$ Fourier coefficients. We assume $m \geq d := \dim \widetilde{M}_{2k}$ without loss of generality.

    Now, suppose that for some integer $x \geq 4$, which can depend on $k$, we have
    \begin{align} \label{Eqn: Arbitary Upper Bound}
        m < k^2 \cE_k(x) \cE_k(x-1).
    \end{align}
    Note that such an integer must exist. We now build the $m \times d$ matrix $A$
    \begin{align*}
    A = \begin{pmatrix}
            a_1(0) & a_2(0) & \cdots & a_d(0) \\
            a_1(1) & a_2(1) & \cdots & a_d(1) \\
            a_1(2) & a_2(2) & \cdots & a_d(2) \\
            a_1(3) & a_2(3) & \cdots & a_d(3) \\
            \vdots & \vdots & \ddots & \vdots \\
            a_1(m) & a_2(m) & \cdots & a_d(m)
        \end{pmatrix},
    \end{align*}
    where the coefficients are chosen according to a $\Z$-basis of $\widetilde{M}_{2k}^\Z$ as guaranteed by Proposition \ref{P: Z-basis bound}. We will apply Proposition \ref{P: Full Column Rank} to this matrix. By Proposition \ref{P: Z-basis bound}, we see that
    \begin{align*}
        B = \lp \dfrac{k+6}{6} \rp^{\frac{k^2}{3}} \cdot 109 \cdot 2^{2k-1} m^{2k-1} \lp 1 + \log(m) \rp^k
    \end{align*}
    must be a uniform upper bound on all elements of the matrix $A$ for $k \geq 10$.

    By Proposition \ref{P: Full Column Rank}, if we can choose a list of primes $p_1, \dots p_N$ for which
    \begin{align*}
        p_1 p_2 \cdots p_N > B^d d^{d/2}
    \end{align*}
    and for which $A$ has full rank modulo $p_j$ for each $j$, then we then know that $A$ has full column rank. We will choose the primes $p_5 = 11, p_6 = 13$, and so on up to $p_N$ for some particular $N$, listing primes in order of size. Then by using Lemma \ref{L: Product bound}, Proposition \ref{P: Full Column Rank} will apply if we simply prove that
    \begin{align} \label{Eqn: First reduction inequality}
        \lp N \log(N) \rp^N e^{-2N} > \lp \dfrac{k+6}{6} \rp^{\frac{dk^2}{3}} \cdot 109^d \cdot 2^{d(2k-1)} m^{d(2k-1)} \lp 1 + \log(m) \rp^{dk} d^{d/2}.
    \end{align}

    To simplify this, we will bound the dimension $d$ of the space $\widetilde{M}_{2k}$, using \eqref{Eq: Quasimodular Dimension} to conclude that $d < \frac{k^2}{9}$ is valid for $k \geq 10$. Therefore, instead of \eqref{Eqn: First reduction inequality} we may prove
    \begin{align*}
        \lp N \log(N) \rp^N e^{-2N-6} > \lp \dfrac{k+6}{6} \rp^{\frac{k^4}{27}} \cdot 109^{\frac{k^2}{9}} \cdot 2^{\frac{2k^3-k^2}{9}} m^{\frac{2k^3-k^2}{9}} \lp 1 + \log(m) \rp^{\frac{k^3}{9}} \lp \dfrac{k^2}{9} \rp^{\frac{k^2}{18}}.
    \end{align*}
    for $k \geq 10$. After taking logarithms and simplifying, we obtain
    \begin{align} \label{Eqn: For later appeal}
        N \log(N) &+ N \log\log(N) - 2N > R_k + \dfrac{2k^3-k^2}{9} \log(m) + \dfrac{k^3}{9} \log\lp 1 + \log(m) \rp,
    \end{align}
    where
    \begin{align} \label{Eqn: R_k point}
        R_k := 6 + \dfrac{k^4}{27} \log\lp\dfrac{k+6}{6}\rp &+ \dfrac{k^2}{18} \log\lp\dfrac{k^2}{9} \rp + \dfrac{k^2}{9} \log(109) + \dfrac{2k^3-k^2}{9} \log(2).
    \end{align}
    It is tedious but elementary to prove from \eqref{Eqn: R_k point} that $R_k < \frac{k^4 \log(k)}{27}$ for $k \geq 10$, and after making other similar trivial simplifications to \eqref{Eqn: For later appeal}, we may prove now the simpler equation
    \begin{align} \label{Eqn: Most Simplified}
        N \log(N) &+ N \log\log(N) - 2N > \dfrac{k^4 \log(k)}{27} + \dfrac{2k^3}{9} \log(m) + \dfrac{k^3}{9} \log\lp 1 + \log(m) \rp.
    \end{align}

    Now, invoking the assumption $m \leq k^2 \cE_k(x) \cE_k(x-1)$, our objective is construct a reasonably small $N$ (permitted to depend on $k$) satisfying
    \begin{align} \label{Eqn: Temp}
        N \log(N) + N \log\log(N) - 2N &> \dfrac{k^4 \log(k)}{27} + \dfrac{2k^3}{9} \log(k^2) + \dfrac{2k^3}{9} \log\cE_k(x) + \dfrac{2k^3}{9} \log\cE_k(x-1) \notag \\ &+ \dfrac{k^3}{9} \log\lp 1 + \log(k^2) + \log\cE_k(x) + \log\cE_k(x-1) \rp.
    \end{align}
    The growth on the right-hand side is dominated by the term $\frac{2k^3}{9} \log\cE_k(x) = \frac{2k^3}{9} \cE_k(x-1) \log(k)$, which suggests a choice of $N = \cE_k(x-1)$. With this choice, we wish to prove \eqref{Eqn: Temp}.
    With the assumptions $x \geq 4$ and $k \geq 10$, this is tedious but straightforward. In particular, the left-hand side has order of growth $N \log(N) \gg \cE_k(x-1) \cE_k(x-2)$ and the right-hand side has growth $k^3 \log\cE_k(x) = k^3 \cE_k(x-1) \log(k)$, and the left-hand side outstrips the right as long as $x \geq 4$. One might for instance rigorously prove separately the inequalities
    \begin{align*}
        N \log(N) > \dfrac{k^4 \log(k)}{27} + \dfrac{2k^3}{9} \log(k^2) + \dfrac{2k^3}{9} \log\cE_k(x) + \dfrac{2k^3}{9} \log\cE_k(x-1)
    \end{align*}
    and
    \begin{align*}
        N \log\log(N) - 2N > \dfrac{k^3}{9} \log\lp 1 + \log(k^2) + \log\cE_k(x) + \log\cE_k(x-1) \rp
    \end{align*}
    under the choice $N = \cE_k(x-1)$ by using elementary inequalities and considerations of monotonicity of various functions involving $\cE_k(x)$ in the $k$ variable. Such a verification is elementary due to the rapid growth of $\cE_k(x)$.

    These arguments, when we backtrack through the inequalities, prove the conclusion of Proposition \ref{P: Full Column Rank}; that is, since $A$ is assumed to have full column rank, we now know full column rank modulo the product of the primes $p$ from 11 to $p_N$.

    We now ask how many of the initial coefficients in $A$ are required to verify the fact that $A$ has full column rank modulo $p$ for primes up to $p_N$, and we will use this calculation in order to apply Proposition \ref{P: Full Column Rank} in the reverse direction. Our objective in doing so is to show using Theorem \ref{T: Classical Sturm} that we can reduce the dimensionality of the matrix $A$  (i.e. reduce $m$).
    
    By Lemma \ref{L: Eisenstein congruences}, every basis element $f_j$ (and therefore also each element of the $\Z$-basis constructed in Proposition \ref{P: Z-basis bound}) is equivalent to a modular form $g_j$ of weight $2pk$. By Theorem \ref{T: Classical Sturm}, it will thus take the first $\frac{kp}{6}+1$ coefficients of a quasimodular form in order to determine that form. That is, the first $\frac{kp}{6}+1$ rows of $A$ are sufficient to determine $A$ modulo $p$. Therefore, the first $\frac{kp_N}{6}+1$ coefficients of $A$ are sufficient to determine $A$ modulo $P_{11}(N)$. By our arguments justifying \eqref{Eqn: First reduction inequality} by proving the stronger inequality \eqref{Eqn: Temp}, then by Proposition \ref{P: Full Column Rank}, it now follows that the first $\frac{kp_N}{6}+1$ coefficients of $A$ are sufficient to determine all of $A$ with the selection $N = \cE_k(x-1)$.

    But by Lemma \ref{L: Bertrand}, we then have that the first
    \begin{align*}
        \dfrac{k p_N}{6}+1 < \dfrac{2 \cE_k(x-1) \log \cE_k(x-1)}{6}+1 < k^2 \cE_k(x-1) \cE_k(x-2).
    \end{align*}
    Therefore, we may, instead of assuming in \eqref{Eqn: Arbitary Upper Bound} that $m \leq k^2 \cE_k(x) \cE_k(x-1)$ coefficients are sufficient to determine $A$ modulo $P_{11}(N)$ as long as $x \geq 4$, and by Proposition \ref{P: Full Column Rank} this is sufficient to prove that $A$ has full rank over $\Z$. Therefore, we can replace our initial assumption of $m \leq k^2 \cE_k(x) \cE_k(x-1)$ with the weaker assumption
    \begin{align} \label{Eqn: New Arbitrary Upper Bound}
        m \leq k^2 \cE_k(x-1) \cE_k(x-2).
    \end{align}

    By assuming the bound \eqref{Eqn: Arbitary Upper Bound} for some $x \gg_k 0$, which must exist because $m$ is necessarily a finite number, we can now prove the bound \eqref{Eqn: New Arbitrary Upper Bound}, as long as $x \geq 4$. Therefore, by induction downward on $x$ (i.e. taking $x \mapsto x-1$ inductively), we can assume \eqref{Eqn: Arbitary Upper Bound} with $x=4$ (dependence on $k$ only affects the number of descent steps required, not the end result), which proves \eqref{Eqn: New Arbitrary Upper Bound} with $x=4$, so that
    \begin{align*}
        m \leq k^2 k^k k^{k^k},
    \end{align*}
    which completes the proof.
\end{proof}

\subsubsection{Stage 3: Final steps of reducing bound}

The final step to prove Theorem \ref{T: Main} is a continued implementation of the very same methodology used to prove Proposition \ref{P: Reduction of Bound}, applied to the starting point $m \leq k^2 k^k k^{k^k}$ given to us by Proposition \ref{P: Reduction of Bound}. Though the original level of refinement didn't allow us to go further down the ladder than this, we can still leverage the same idea to obtain a polynomial bound.

By Proposition \ref{P: Reduction of Bound}, assume that $m \leq k^2 k^k k^{k^k}$ coefficients are sufficient to verify that $A$ has full column rank. We again appeal to Proposition \ref{P: Full Column Rank}. Combining \eqref{Eqn: Most Simplified}, valid for $k \geq 10$, and the same arguments as in the proof of Proposition \ref{P: Reduction of Bound}, we know that $A$ must have full column rank if we can establish that $A$ has full column rank modulo $p$ for the consecutive primes $11, 13, \dots, p_N$ for $N$ satisfying
\begin{align*}
    N \log(N) + N \log\log(N) - 2N > \dfrac{k^4 \log(k)}{27} + \dfrac{2k^3}{9} \log(m) + \dfrac{k^3}{9} \log\lp 1 + \log(m) \rp.
\end{align*}
By using $m \leq k^2 k^k k^{k^k}$, we may prove the stronger inequality
\begin{align*}
    N \log(N) + N \log\log(N) - 2N > \dfrac{k^4 \log(k)}{27} + \dfrac{2k^3}{9} \log\lp k^2 k^k k^{k^k} \rp + \dfrac{k^3}{9} \log\lp 1 + \log\lp k^2 k^k k^{k^k} \rp \rp.
\end{align*}
The term with maximal growth on the right-hand side is $\frac{2k^3}{9} \log\lp k^{k^k} \rp = \frac{2}{9} k^{k+3} \log(k)$, which suggests choosing $N = k^{k+3}$. This choice, by elementary though tedious means similar to those used in the proof of Proposition \ref{P: Reduction of Bound}, holds for $k \geq 10$. Thus, following the same use of Lemma \ref{L: Bertrand} and Theorem \ref{T: Classical Sturm} used in the proof of Proposition \ref{P: Reduction of Bound}, we will only require up to
\begin{align*}
    \dfrac{k p_N}{6} + 1 \leq \dfrac{k^{k+4} \log\lp k^{k+3} \rp}{3}+1 < k^{k+5}
\end{align*}
coefficients. Thus, we may assume $m \leq k^{k+5}$.

We now follow the same procedure. Using \eqref{Eqn: Most Simplified} along with $m \leq k^{k+5}$, we conclude that a sufficient number of primes $11, 13, \dots, p_N$ to utilize Proposition \ref{P: Full Column Rank} is furnished by any $N$ satisfying
\begin{align*}
    N \log(N) + N \log\log(N) - 2N > \dfrac{k^4 \log(k)}{27} + \dfrac{2k^3}{9} \log\lp k^{k+5} \rp + \dfrac{k^3}{9} \log\lp 1 + \log\lp k^{k+5} \rp \rp,
\end{align*}
which it can be shown holds for $N = \lfloor \frac{k^4 \log(k)}{2} \rfloor$ and $k \geq 10$. Thus, using again Lemma \ref{L: Bertrand} and Theorem \ref{T: Classical Sturm} as previously done, we arrive at the fact that the first
\begin{align*}
    \dfrac{k p_N}{6}+1 \leq \dfrac{k^5\log(k) \log\lp \frac{k^4 \log(k)}{2} \rp}{6} + 1 < 2 k^5 \log(k) \log\log(k)
\end{align*}
for $k \geq 10$. After comparing to the cases $1 \leq k \leq 10$, using code discussed in Section \ref{Sec: Conclusion}, the same bound is valid in these cases, and so we may assume $m \leq 2 k^5 \log(k) \log\log(k)$.

We now follow the same procedure for a final instance. Using \eqref{Eqn: Most Simplified} along with $m \leq 2 k^5 \log(k) \log\log(k)$, we conclude that a sufficient number of primes $11, 13, \dots, p_N$ to utilize Proposition \ref{P: Full Column Rank} is furnished by any $N$ satisfying
\begin{align*}
    N \log(N) + N \log\log(N) - 2N &> \dfrac{k^4 \log(k)}{27} + \dfrac{2k^3}{9} \log\lp 2 k^5 \log(k) \log\log(k) \rp \\ &+ \dfrac{k^3}{9} \log\lp 1 + \log\lp 2 k^5 \log(k) \log\log(k) \rp \rp,
\end{align*}
which it can be shown holds for $N = \lfloor \frac{k^4}{16} \rfloor$ and $k \geq 10$.\footnote{One might choose a mildly better constant. But since order of magnitude stabilizes at this point, and since this bound is not theoretical optimal in any case, we do not find that such strain is worthwhile.} Thus, using again Lemma \ref{L: Bertrand} and Theorem \ref{T: Classical Sturm} as previously done, we arrive at the fact that the first
\begin{align*}
    \dfrac{k p_N}{6}+1 \leq \dfrac{k^5 \log(k^4/16)}{48} + 1 < \dfrac{k^5 \log(k)}{12}
\end{align*}
for $k \geq 10$. After comparing to the cases $1 \leq k \leq 10$, using code discussed in Section \ref{Sec: Conclusion}, the same bound is valid in these cases, and this proves Theorem \ref{T: Main} (1); since we have made use of a $\Z$-basis for $\widetilde{M}_{2k}^\Z$, no relations among its elements modulo $m$ for any $m \geq 2$ may exist, and therefore this bound also extends to prove Theorem \ref{T: Main} (2). $\hfill\qed$

\section{Proof of Theorem \ref{T: Sturm Q Only}} \label{S: Proof Q}

In this section, we prove Theorem \ref{T: Sturm Q Only}, and so $f \in \widetilde{M}_{2k}$ is some quasimodular form with integer coefficients. Let $\mathcal E_{2k} := \{ E_2^a E_4^b E_6^c \}_{2a+4b+6c=2k}$ be the standard basis of $\widetilde{M}_{2k}$. Relabel these forms as $f_1, f_2, \dots, f_d$ for $d = \dim \widetilde{M}_{2k}$ for convenience; this notation is helpful for assigning each quasimodular form to a column of a matrix.

\subsubsection{Stage 1: From mere existence to an explicit bound}

The proof follows in the same steps as Stage 1 in the proof of Theorem \ref{T: Main}. There must be some Sturm bound because for each $k \geq 1$, each space $\widetilde{M}_{2k}$ is finite-dimensional, and because each element of $\widetilde{M}_{2k}$ is uniquely determined by its Fourier expansion. Therefore, there must be some integer $m$, which we can assume without loss of generality depends only on $k$ (for instance by Theorem \ref{T: Main}), so that the first $m$ coefficients of any $f \in \widetilde{M}_{2k}$ determine $f$ uniquely. Now, let $f_i = \sum_{n \geq 0} a_i(n) q^n$ denote the Fourier expansion of each of the $\mathcal E_{2k}$-basis elements $f_i$. We choose some $m$ as determined above, and we consider the matrix
\begin{align*}
    A = \begin{pmatrix}
            a_1(0) & a_2(0) & \cdots & a_d(0) \\
            a_1(1) & a_2(1) & \cdots & a_d(1) \\
            a_1(2) & a_2(2) & \cdots & a_d(2) \\
            a_1(3) & a_2(3) & \cdots & a_d(3) \\
            \vdots & \vdots & \ddots & \vdots \\
            a_1(m) & a_2(m) & \cdots & a_d(m)
        \end{pmatrix}.
\end{align*}
By our choice of $m$, the matrix $A$ has full column rank. The basic idea of the rest of the proof follows precisely that used to prove Theorem \ref{T: Main}. We now execute the same algorithm.

\subsubsection{Stage 2: Recursive descent}

We begin now the next stage of argument, by which we demonstrate that we can assume $m$ is smaller than an explicit function of $k$. We will again use the repeated exponentials
\begin{align*}
    \cE_k(x) := k^{k^{k^{\dots}}}, \ \ \ \text{exponentiating $x$ times}.
\end{align*}
We prove the following analog of Proposition \ref{P: Reduction of Bound}

\begin{proposition} \label{P: Q - Reduction of Bound}
    There is an integer $m \leq k^{k+3}$ such that the first $m$ coefficients of a quasimodular form $f \in \widetilde{M}_{2k}$ determine that form uniquely.
\end{proposition}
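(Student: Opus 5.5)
The plan is to run the same descent as in the proof of Proposition \ref{P: Reduction of Bound}, now with the matrix $A$ built from the basis $\mathcal E_{2k}$ itself rather than the $\Z$-basis of Proposition \ref{P: Z-basis bound}. Since we only need full column rank over $\Q$ (equivalently over $\Z$), any integer basis will do. The only change to the input data is the entry bound: by Lemma \ref{L: Eisenstein product bound}, every entry of the $m\times d$ matrix $A$ satisfies
\begin{align*}
|a_i(n)| \le B := 109\cdot 2^{2k-1} m^{2k-1}(1+\log m)^k.
\end{align*}
This drops the factor $\left(\frac{k+6}{6}\right)^{k^2/3}$, which was the source of the $k^4\log(k)$ term in $R_k$. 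I treat $k<10$ numerically, as before, and assume $k\ge 10$ below.

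\textbf{Key inequality.} By Proposition \ref{P: Full Column Rank}, $A$ has full column rank over $\Z$ as soon as it has full column rank modulo $P_{11}(N)$ with $P_{11}(N)>B^d d^{d/2}$. Using Lemma \ref{L: Product bound} and $d<k^2/9$, and taking logarithms as in \eqref{Eqn: For later appeal}, it suffices that
\begin{align*}
N\log N + N\log\log N - 2N > R_k' + \frac{2k^3}{9}\log m + \frac{k^3}{9}\log(1+\log m),
\end{align*}
where $R_k' = O(k^3)$. This is \eqref{Eqn: Most Simplified} with $k^4\log(k)/27$ replaced by a cubic term.

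\textbf{Descent.} Stage 1 (existence of some $m$) is unchanged, and Theorem \ref{T: Main} even gives an explicit starting point $m\le \lceil k^5\log(k)/12\rceil+1$. I will nevertheless phrase the reduction so that it works from any hypothesized bound, exactly as in Proposition \ref{P: Reduction of Bound}. Suppose $m<k^2\cE_k(x)\cE_k(x-1)$. Choose $N=\cE_k(x-1)$ and verify the inequality above. Then Lemma \ref{L: Quasimodular to modular} and Theorem \ref{T: Classical Sturm} show that the first $\frac{kp_N}{6}+1$ rows determine $A$ modulo each $p\le p_N$. Combined with Lemma \ref{L: Bertrand}, this gives the smaller bound $k^2\cE_k(x-1)\cE_k(x-2)$. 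Inducting down to $x=4$ yields $m\le k^2k^kk^{k^k}$. One more pass with $N=k^{k+3}$ gives $m\le k^{k+5}$ as in Stage 3. One further pass with $\log m\le (k+5)\log k$ then needs only $N\log N\gtrsim \frac{2k^3}{9}(k+5)\log k$, which holds already for $N$ of order $k^4/9$. This gives $\frac{kp_N}{6}+1\ll k^5\log k$. That is well below $k^{k+3}$ for $k\ge 10$, so the stated bound $m\le k^{k+3}$ follows, with room to spare. Alternatively, one can stop right after the pass with $N=k^{k+3}$, if the constants there already give $k^{k+3}$ directly.

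\textbf{Main obstacle.} The real work is the ``tedious but elementary'' verification of the logarithmic inequality for each chosen $N$, uniformly in $k\ge 10$, and in particular checking that $\frac{kp_N}{6}+1$ lands under the target after applying Lemma \ref{L: Bertrand}. I would first try to bound each right-hand term separately by a fixed fraction of $N\log N$, using monotonicity in $k$ at $k=10$. The case $k<10$ is handled by the computations of Section \ref{Sec: Conclusion}.
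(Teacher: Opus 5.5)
Your proposal is correct. It uses the same transfer engine as the paper: the basis $\mathcal E_{2k}$ with the entry bound of Lemma~\ref{L: Eisenstein product bound}, the determinant criterion of Proposition~\ref{P: Full Column Rank}, Lemma~\ref{L: Product bound}, and Sturm modulo $p$ in weight $2kp$. Where you differ is the endgame.

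\textbf{The endgame.} The paper runs the tower descent under the hypothesis $x\ge 3$ and reads off $k^2\cE_k(2)\cE_k(1)=k^{k+3}$ at $x=3$. You stop the tower at $x=4$, exactly as in Proposition~\ref{P: Reduction of Bound}, and then make two polynomial-scale passes. Your stopping point is the right one. The step $x\to x-1$ with $N=\cE_k(x-1)$ requires, to leading order, that $N\log N=\cE_k(x-1)\cE_k(x-2)\log k$ beat $\frac{2k^3}{9}\log\cE_k(x)=\frac{2k^3}{9}\cE_k(x-1)\log k$. That is, it requires $\cE_k(x-2)>\frac{2k^3}{9}$. This holds for $x\ge 4$ but fails at $x=3$, where it would read $k>\frac{2k^3}{9}$. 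The paper's justification there ($\cE_k(x-1)\ge k^k$) only yields $\log\cE_k(2)=k\log k$, which is smaller than $\frac{2k^3}{9}\log k$.

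So the paper's route would be one step shorter if it worked. Yours costs two extra passes, but it actually goes through and lands at $O(k^5\log k)$, far below $k^{k+3}$. There is an even shorter route that you nearly state yourself. Theorem~\ref{T: Main}(1), whose proof does not use this proposition, already gives $\lceil k^5\log(k)/12\rceil+1\le k^{k+3}$ for every $k\ge 1$, so the statement follows immediately.

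\textbf{Small points to fix.} None of these affects the conclusion.
\begin{enumerate}
\item With Lemma~\ref{L: Bertrand} as stated, the pass with $N=k^{k+3}$ gives only $\frac{kp_N}{6}+1<\frac{(k+3)\log k}{3}\,k^{k+4}+1$. This is not below $k^{k+5}$ once $k\ge 11$. Write $k^{k+6}$ instead; the slip is inherited from the paper's Stage 3 and changes nothing downstream.
\item In your last pass, $N\approx k^4/9$ narrowly fails at $k=10$ once you include $R_k'$, the $-2N$ term and the $\log(1+\log m)$ term. The choice $N=\lfloor k^4/3\rfloor$ works comfortably.
\item The direction of Proposition~\ref{P: Full Column Rank} you quote (full rank modulo $P_{11}(N)$ implies full rank over $\Z$) is the trivial one. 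The descent needs the converse. Full column rank over $\Z$ of the $m$-row matrix gives $0<|G|\le B^d d^{d/2}<P_{11}(N)$ for the gcd $G$ of its maximal minors. Hence some prime $p\in[11,p_N]$ has $p\nmid G$, and Sturm modulo $p$ is applied to that $p$ to cut the number of rows to $\frac{kp}{6}+1$.
\end{enumerate}
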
   

\begin{proof}
    The bound can be checked numerically for $k < 10$, and therefore we may assume that $k \geq 10$.
    
    As discussed before, there must be some integer $m$ satisfying this property since quasimodular forms make up a finite dimensional vector space, each element of which is uniquely determined by its Fourier coefficients.
    
    Now, suppose that for some integer $x \geq 3$, which is allowed to depend on $k$, we have
    \begin{align} \label{Eqn: Arbitary Upper Bound *}
        m < k^2 \cE_k(x) \cE_k(x-1).
    \end{align}
    Note that such an integer must exist. We now build the $m \times d$ matrix $A$
    \begin{align*}
        A = \begin{pmatrix}
                a_1(0) & a_2(0) & \cdots & a_d(0) \\
                a_1(1) & a_2(1) & \cdots & a_d(1) \\
                a_1(2) & a_2(2) & \cdots & a_d(2) \\
                a_1(3) & a_2(3) & \cdots & a_d(3) \\
                \vdots & \vdots & \ddots & \vdots \\
                a_1(m) & a_2(m) & \cdots & a_d(m)
            \end{pmatrix}.
    \end{align*}
    to which we will apply Proposition \ref{P: Full Column Rank}. By Lemma \ref{L: Eisenstein product bound}, we obtain as a valid upper bound
    \begin{align*}
        B = 109 \cdot 2^{2k-1} m^{2k-1} \lp 1 + \log(m) \rp^k.
    \end{align*}
    By Proposition \ref{P: Full Column Rank}, if we can choose a list of primes $p_1, \dots p_N$ for which
    \begin{align*}
        p_1 p_2 \cdots p_N > B^d d^{d/2}
    \end{align*}
    and for which $A$ has full rank modulo $p_j$ for each $j$, then we then know that $A$ has full column rank. We will choose the primes $p_2 = 3, p_3 = 5$, and so on up to $p_N$ for some particular $N$, listing primes in order of size. Then by using Lemma \ref{L: Product bound}, Proposition \ref{P: Full Column Rank} will apply if we simply prove that
    \begin{align} \label{Eqn: First reduction inequality *}
        \lp N \log(N) \rp^N e^{-2N} > \lp 109 \cdot 2^{2k-1} m^{2k-1} \lp 1 + \log(m) \rp^k \rp^d d^{d/2}.
    \end{align}
    To simplify this, we use $d < \frac{k^2}{9}$ for $k \geq 10$, which follows from \eqref{Eq: Quasimodular Dimension}. Therefore, instead of \eqref{Eqn: First reduction inequality *} we may prove
    \begin{align} \label{Eqn: Second reduction inequality *}
        \lp N \log(N) \rp^N e^{-2N-6} > 109^{k^2/9} \cdot 2^{k^2(2k-1)/9} m^{k^2(2k-1)/9} \lp 1 + \log(m) \rp^{k^3/9} \lp \dfrac{k^2}{9} \rp^{k^2/18}.
    \end{align}
    After taking logarithms and simplifying, we obtain
    \begin{align} \label{Eqn: For later appeal *}
        N \log(N) &+ N \log\log(N) - 2N > R_k + \dfrac{k^2(2k-1)}{9} \log(m) + \dfrac{k^3}{9}\log\lp 1 + \log(m) \rp,
    \end{align}
    where
    \begin{align} \label{Eqn: R_k point *}
        R_k := 6 + \dfrac{k^2}{9} \log(109) + \dfrac{k^2(2k-1)}{9} \log(2) + \dfrac{k^2}{18} \log\lp \dfrac{k^2}{9} \rp.
    \end{align}
    It can be proved with elementary techniques that $R_k < \frac{2k^3}{9}$ for $k \geq 10$, and when we additionally use \eqref{Eqn: Arbitary Upper Bound} and other simplifications, we aim to prove that
    \begin{align*}
        N \log(N) + N \log\log(N) - 2N > \dfrac{2k^3}{9} + \dfrac{2k^3}{9} \log\lp k^2 \cE_k(x) \cE_k(x-1) \rp + \dfrac{k^3}{9} \log\lp 1 + \log\lp k^2 \cE_k(x) \cE_k(x-1) \rp \rp.
    \end{align*}
    Now, the right-hand side has dominating term approximately $k^3 \log \cE_k(x) = k^3 \log(k) \cE_k(x-1)$, and so we choose $N = \cE_k(x-1)$ to reflect this; we thus need to verify
    \begin{align} \label{Eqn: Third reduction inequality *}
        \cE_k(x-1) \log&\lp \cE_k(x-1) \rp + \cE_k(x-1) \log\log\lp \cE_k(x-1) \rp - 2 \cE_k(x-1) \notag \\ &> \dfrac{2k^3}{9} + \dfrac{2k^3}{9} \log\lp k^2 \cE_k(x) \cE_k(x-1) \rp + \dfrac{k^3}{9} \log\lp 1 + \log\lp k^2 \cE_k(x) \cE_k(x-1) \rp \rp.
    \end{align}
    We now show this holds under the assumption $k \geq 10$. Diving both sides by $\cE_k(x-1)$, we see that
    \begin{align*}
        \log \cE_k(x-1) +& \log\log \cE_k(x-1) - 2 \\ &> \dfrac{2k^3}{9\cE_k(x-1)} + \dfrac{2k^3 \log\lp k^2 \cE_k(x) \cE_k(x-1) \rp}{9\cE_k(x-1)} + \dfrac{k^3 \log\lp 1 + \log\lp k^2 \cE_k(x) \cE_k(x-1) \rp \rp}{9\cE_k(x-1)}.
    \end{align*}
    This is done by proving separately that
    \begin{align*}
        \log \cE_k(x-1) > \dfrac{2k^3 \log\lp k^2 \cE_k(x) \cE_k(x-1) \rp}{9\cE_k(x-1)} \sim \dfrac{2k^3\log(k)}{9},
    \end{align*}
    which is true since $\cE_k(x-1) \geq k^k$ for $x \geq 3$, and
    \begin{align*}
        \log\log \cE_k(x-1) > 2 + \dfrac{2k^3}{9\cE_k(x-1)} + \dfrac{k^3 \log\lp 1 + \log\lp k^2 \cE_k(x) \cE_k(x-1) \rp \rp}{9\cE_k(x-1)} \sim 2,
    \end{align*}
    in which we use $\log\log \cE_k(x-1) = \log\lp \cE_k(x-2) \log(k) \rp \gg \log(k)$ for $x \geq 3$ and $k \geq 10$. These asymptotics verify \eqref{Eqn: Third reduction inequality *} for $k \gg 0$; the choice $k \geq 10$ can then be demonstrated using considerations of monotonicity.

    These arguments, when we backtrack through the inequalities, prove the conclusion of Proposition \ref{P: Full Column Rank}; that is, any $A$ which has full column rank modulo the primes $p$ from 11 to $p_N$, must have full column rank.

    We now ask how many of the initial coefficients in $A$ are required to verify the fact that $A$ has full column rank modulo $p$ for primes up to $p_N$. By Lemma \ref{L: Eisenstein congruences}, every basis element $f_j$ is equivalent to a modular form $g_j$ of weight $2pk$, and by Proposition \ref{P: Serre fact}, these $g_j$ are pairwise inequivalent modulo primes $p \geq 11$. By Theorem \ref{T: Classical Sturm} in the modulo $p$ case, it will take the first $\frac{kp}{6}+1$ coefficients of a quasimodular form in order to determine that form. That is, the first $\frac{kp}{6}+1$ rows of $A$ are sufficient to determine $A$ modulo $p$. Therefore, the first $\frac{kp_N}{6}+1$ coefficients of $A$ are sufficient to determine $A$ modulo $P_{11}(N)$. By our arguments justifying \eqref{Eqn: First reduction inequality *} by proving the stronger inequality \eqref{Eqn: Third reduction inequality *}, then by Proposition \ref{P: Full Column Rank}, it now follows that the first $\frac{kp_N}{6}+1$ coefficients of $A$ are sufficient to determine all of $A$. But by Lemma \ref{L: Bertrand}, we then have that the first
    \begin{align*}
        \dfrac{kp_N}{6}+1 < \dfrac{2 \cE_k(x-1) \log \cE_k(x-1)}{6}+1 < k^2 \cE_k(x-1) \cE_k(x-2).
    \end{align*}
    Therefore, we may, instead of assuming in \eqref{Eqn: Arbitary Upper Bound *} that $m \leq k^2 \cE_k(x) \cE_k(x-1)$ coefficients are sufficient to determine $A$ as long as $x \geq 3$, we can instead assume that
    \begin{align} \label{Eqn: New Arbitrary Upper Bound *}
        m \leq k^2 \cE_k(x-1) \cE_k(x-2)
    \end{align}
    coefficients of $A$ are sufficient to determine $A$.

    We now perform induction on $x \geq 3$. By assuming the bound \eqref{Eqn: Arbitary Upper Bound *} for some $x \gg_k 0$, which must exist because $m$ is finite, we can prove the bound \eqref{Eqn: New Arbitrary Upper Bound *}, as long as $x \geq 3$. Therefore, by induction downward on $x$ (i.e. taking $x \mapsto x-1$ inductively, we can assume \eqref{Eqn: Arbitary Upper Bound *} with $x=3$, which proves \eqref{Eqn: New Arbitrary Upper Bound *} with $x=3$, so that
    \begin{align*}
        m \leq k^{k+3},
    \end{align*}
    which completes the proof.
\end{proof}

\subsubsection{Stage 3: Final steps of reducing bound}

The final step to prove Theorem \ref{T: Main} is a more refined implementation of the very same methodology used to prove Proposition \ref{P: Q - Reduction of Bound}, applied to the starting point $m \leq k^{k+3}$ given to us by Proposition \ref{P: Q - Reduction of Bound}. Though the original level of refinement didn't allow us to go further down the ladder than this (this would end up implying that $\widetilde{M}_{2k}$ has a Sturm bound that is linear in $k$, which by its dimensionality is false), we can still leverage the same idea with extra care to obtain a polynomial bound.

So, by Proposition \ref{P: Reduction of Bound}, assume that $m \leq k^{k+3}$ coefficients are sufficient to verify that $A$ has full column rank. We again appeal to Proposition \ref{P: Full Column Rank}. Combining \eqref{Eqn: For later appeal}, trivial inequalities including $R_k < \frac{2k^3}{9}$ for $k \geq 10$, and the same arguments as in the proof of Proposition \ref{P: Reduction of Bound}, we know that $A$ must have full column rank if we can establish that $A$ has full column rank modulo $p$ for the consecutive primes $11, 13, \dots, p_N$ for $N$ satisfying
\begin{align} \label{Eqn: Modified log inequality}
    N \log(N) + N \log\log(N) - 2N > \dfrac{2k^3}{9} + \dfrac{2k^3}{9} \log(m) + \dfrac{k^3}{9} \log\lp 1 + \log(m) \rp.
\end{align}
Since we have established by Proposition \ref{P: Reduction of Bound} that $m \leq k^{k+3}$ may be assumed without loss of generality, we may therefore resolve instead
\begin{align*}
    N \log(N) + N \log\log(N) - 2N > \dfrac{2k^3}{9} + \dfrac{2k^3}{9}(k+3) \log(k) + \dfrac{k^3}{9} \log\lp 1 + (k+3)\log(k) \rp.
\end{align*}
With the selection $N = \lfloor \frac{k^4}{3} \rfloor$, this holds for $k \geq 10$. Therefore, $A$ has full column rank as long as we can verify it has full column rank modulo primes starting at 11 and up to $p_N$. Using Lemmas \ref{L: Rosser's Theorem} and \ref{L: Bertrand}, this prime is bounded by $\frac{8}{3} k^4 \log(k)$, and then by Theorem \ref{T: Classical Sturm} we note that in order to verify that $A$ has full column rank modulo all primes up to $p_N \leq \frac{8}{3} k^4 \log(k)$, one needs only check the first
\begin{align*}
    \dfrac{k p_N}{6}+1 \leq \dfrac{4 k^5 \log(k)}{9}
\end{align*}
coefficients of each form. Thus, we may assume in place of $m \leq k^{k+3}$ that $m \leq \frac{4 k^5 \log(k)}{9}$. Under this new assumption, and again applying \eqref{Eqn: Modified log inequality}, we must find $N$ satisfying
\begin{align} \label{Eqn: Final log inequality}
    N \log(N) + N \log\log(N) - 2N > \dfrac{2k^3}{9} + \dfrac{2k^3}{9} \log\lp \dfrac{4 k^5 \log(k)}{9} \rp + \dfrac{k^3}{9} \log\lp 1 + \log\lp \dfrac{4 k^5 \log(k)}{9} \rp \rp.
\end{align}
The selection $N = \lfloor \frac{5k^3}{9} \rfloor$ is easily verified to hold for $k \geq 10$, and following again the uses of Proposition \ref{P: Full Column Rank}, Lemmas \ref{L: Rosser's Theorem} and \ref{L: Bertrand}, and Theorem \ref{T: Classical Sturm}, we establish that the last prime $p_N$ which we must check satisfies $p_N \leq 2N \log(N) \leq \frac{10k^3}{9} \log\lp \frac{5k^3}{9} \rp$, and so we need only check
\begin{align*}
    \dfrac{k p_N}{6}+1 \leq \dfrac{5 k^4 \log\lp \frac{5k^3}{9} \rp}{27} + 1 < \dfrac{5 k^4 \log\lp k \rp}{9}
\end{align*}
coefficients of each form before we know that $A$ has full column rank. So, we may assume $m \leq \frac{5 k^4 \log\lp k \rp}{9}$.

Repeating our use of \eqref{Eqn: Modified log inequality}, we can use primes starting at 11 up to any $p_N$ with $N$ satisfying
\begin{align} \label{Eqn: Final Final log inequality}
    N \log(N) + N \log\log(N) - 2N > \dfrac{2k^3}{9} + \dfrac{2k^3}{9} \log\lp \dfrac{5 k^4 \log\lp k \rp}{9} \rp + \dfrac{k^3}{9} \log\lp 1 + \log\lp \dfrac{5 k^4 \log\lp k \rp}{9} \rp \rp.
\end{align}
This holds for $k \geq 10$ with $N = \lfloor \frac{k^3}{2} \rfloor$, leads as before to a prime choice of $p_N \leq k^3 \log\lp \frac{k^3}{2} \rp < 3 k^3 \log(k)$, and so we may now assume
\begin{align*}
    m \leq \dfrac{kp_N}{6} + 1 \leq \dfrac{3k^4 \log(k^3)}{6}+1 < \dfrac{k^4 \log(k)}{2} + 1.
\end{align*}
This completes the proof for $k \geq 10$, and so the result follows. $\hfill\qed$

\begin{remark}
    It is worthwhile to note that in the final application, we are not able to improve on $N = O(k^3)$, and so the final result cannot, by these methods alone, improve on a Sturm bound on order $O\lp k^4 \log(k) \rp$. Leading constants can also not be improved by any substantial amount without revising some of the inputs (i.e. improving upon the Hadamard bound for the specific determinants under consideration).
\end{remark}

\section{Concluding Remarks} \label{Sec: Conclusion}

\subsection{Why these bounds?}

We emphasize here previous remarks which have been made as to the direct source of the bounds appearing in Theorems \ref{T: Main} and \ref{T: Sturm Q Only}. The limiting factors of these proofs arise out of Proposition \ref{P: Z-basis bound} in the first case, and Proposition \ref{P: Full Column Rank} (specifically, the determinant bound used therein) in the second case. In each proof, as has been mentioned in due course, we have extended our recursive descent until we have arrived as the optimal polynomial power in $k$ which is permitted by the method, and we have made reasonable efforts to refine constants. For future work, it should therefore be considered paramount to improve upon Propositions \ref{P: Z-basis bound} by either constructing a suitable $\Z$-basis, or constructing a different basis with integer coefficients having less need to remove relations modulo $m$, or else by some other improvement to the application of Swinnerton-Dyer's theorem \cite{SwinnertonDyer}. As for the improvement of Proposition \ref{P: Full Column Rank}, one may ask whether, in the particular cases we consider, one may derive bounds on determinants far superior to the Hadamard bound (almost certainly this is true, but the author is unsure how difficult such a task might be). Improvement to Lemma \ref{L: Eisenstein product bound} is not possible in any degree that will be helpful because growth rates of divisor sums are known and the growth of the Bernoulli numbers $B_{2k}$ does not permit significantly better dependence on $k$.

It may be possible to improve on the use of primes and Theorem \ref{T: Classical Sturm} by permitting the use of prime powers in building the modulus used in Proposition \ref{P: Full Column Rank}. We leave such considerations, and whether they yield substantially improved or only moderately improved bounds, as questions for future work.

\subsection{Why not higher levels?}

We give here further details on the remark made after Theorem \ref{T: Main}. In the classical paper of Sturm \cite{Sturm}, an argument is provided which immediately lifts the Sturm bound for level one to a Sturm bound for any congruence subgroup $\Gamma$ by consideration of the product form
\begin{align*}
    F = \prod_{\gamma \in \Gamma\backslash\SL_2(\Z)} f|_k\gamma
\end{align*}
where $|_k$ is the usual weight $k$ slash action and $f$ is taken to be a quasimodular form on $\Gamma$ only. One can run most of Sturm's argument in the same method as he does; namely, one can show that, up to bounded denominators which can be removed, $F$ is a quasimodular form on $\SL_2(\Z)$ of weight $mk$, where $m$ is the index of $\Gamma$ in $\SL_2(\Z)$, and with integer Fourier expansion at $\infty$. If $f$ is supposed to have $\ord_\infty(f) > v$, then $F$ has order of vanishing $\ord_\infty(F) > mv$. The classical Sturm argument relies on the fact that the new weight $mk$, the new order of vanishing $mv$. Because the Sturm bound of Theorem \ref{T: Classical Sturm} is linear in $k$, one may then apply Theorem \ref{T: Classical Sturm} by substituting $k \mapsto mk$ for the weight without issue. However, the Sturm bounds for quasimodular forms, even a conjecturally optimal one, must grow at least quadratically in $k$, and therefore this substitution argument does not quite succeed. While it is probably still true that the true Sturm bound for $\Gamma$ should be linear in $m$ and quadratic in $k$ in our opinion, the old technique of Sturm is not sufficient to prove this. We therefore leave the study of Sturm bounds for cases of higher level for a continuation of this work.

\subsection{Some Small Weights}

To aid in motivation of future work in this area, it is instructive to investigate some small examples which help in understanding the difficulties of listing precise $\Z$-bases for $\widetilde{M}^\Z_{2k}$. The main obstruction to a more general approach, we note, is the inability to predict precisely how to identify all identities of the sort we see in \eqref{Eq: Derivative IDs}.

\subsubsection{Weight 2}

This case is trivial; the only quasimodular forms of weight 2 are multiples of $E_2$; thus $\{ E_2 \}$ is a $\Z$-basis for $\widetilde{M}_2^\Z$ and a single coefficient is sufficient to determine the form.

\subsubsection{Weight 4}

The standard basis for $\widetilde{M}_4$ is $\{ E_2^2, E_4 \}$. This basis is not a $\Z$-basis, however, since $E_2^2 \equiv E_4 \pmod{12}$. Following the method used in Proposition \ref{P: Z-basis bound}, we may consider instead the basis $\{ E_4, \frac{E_2^2 - E_4}{12} \}$. We get $q$-series expansions $E_4 = 1 + 240 q + \dots$ and $\frac{E_2^2 - E_4}{12} = -24q + \dots$. This is a triangular $\Z$-basis, and thus the first two coefficients suffice to compute a representation for any element of $\widetilde{M}_4^\Z$ using this basis.

Other $\Z$-bases can be found as well. For instance, the elements
\begin{align*}
    \dfrac{1}{6} E_4 + \dfrac{5}{6} E_2^2 &= 1 + 0q + 720q^2 + 3840q^3 + \cdots \\
    \dfrac{7}{6} E_4 - \dfrac{1}{6} E_2^2 &= 1 + 244q + 2448q^2 + 7296q^3 + \cdots
\end{align*}
also form a $\Z$-basis, though not a triangular one as in the previous example.

\subsubsection{Weight 6}

The standard basis for $\widetilde{M}_6$ is $\{ E_2^3, E_2 E_4, E_6 \}$. This basis is not a $\Z$-basis; many relations modulo various small primes can be established. After some numerical work, one can find that the three quasimodular forms
\begin{align*}
    E_2^3 &= 1 - 72q + 1512 q^2 - 3744 q^3 + \dots, \\
    \dfrac{E_2^3 - E_2 E_4}{288} &= 0 + 1q - 18q^2 - 204q^3 + \dots, \\
    \dfrac{5 E_2^3 - 3 E_2 E_4 - 2 E_6}{51640} &= 0 + 0q + 1q^2 + 8q^3 + \dots.
\end{align*}
form a triangular basis with integer coefficients and thus generate $\widetilde{M}_6^\Z$. Thus, three is a Sturm bound for weight 6. As in the previous example, other $\Z$-bases may be constructed, for instance
\begin{align*}
    \dfrac{1}{120} E_6 + \dfrac{21}{80} E_2 E_4 + \dfrac{35}{48} E_2^3 &= 1 + 0q + 0q^2 - 20160q^3 + \cdots \\
    -\dfrac{59}{120} E_6 + \dfrac{161}{80} E_2 E_4 - \dfrac{25}{48} E_2^3 &= 1 + 720q + 0q^2 - 63360q^3 + \cdots \\
    -\dfrac{83}{120} E_6 + \dfrac{57}{80} E_2 E_4 + \dfrac{47}{48} E_2^3 &= 1 + 432q + 10368q^2 + 36864q^3 + \cdots
\end{align*}
is also a $\Z$-basis.

\subsubsection{Coding and observations in some larger weights}

As discussed above, one of the major obstructions to Sturm-type bounds for quasimodular forms has been the absence of any known $\Z$-basis for these spaces. The standard basis generated by the products $E_2^a, E_4^b E_6^c$ of a particular weight, as we have said, generates only a finite index submodule of $\widetilde{M}_{2k}^\Z$. As part of Proposition \ref{P: Z-basis bound}, we described the following algorithm for producing from this standard basis a $\Z$-basis:
\begin{enumerate}
    \item Form a matrix $M$ with the $q$-series expansions of each $E_2^a E_4^b E_6^c$ of a given weight $2k$ as its columns, and with sufficiently many rows to establish linear independence of the columns.
    \item For each prime $2 \leq p \leq 2k+1$:
    \begin{enumerate}
        \item Check whether the columns are linearly dependent modulo $p$:
        \item If such a dependence exists, say $\sum c_i f_i \equiv 0 \pmod{p}$ for some basis elements $f_i$, form a new element $f = \frac{\sum c_i f_i}{p^M} \in \Z[[q]]$ by taking the largest $M$ such that $p^M$ divides every coefficient of $f$;
        \item Perform column operations on $M$ corresponding to the replacement of $f_i$ by $f$ in the basis;
        \item Repeat until the columns of $M$ linearly independent modulo $p$.
    \end{enumerate}
\end{enumerate}
We noted in the proof of Proposition \ref{P: Z-basis bound} that this process necessarily terminates in finite time, and that the particular range of primes cited is sufficient to guarantee that the resulting basis is a $\Z$-basis. We give in the Appendix the precise code used by the author to generate $\Z$-basis using calculations for the Hermite normal form of matrices.

Comments on the observed output of the code are significant. The author has run values $2 \leq k \leq 20$, and in each case the $\Z$-basis only required a number of coefficients equal to the dimension of the space in question\footnote{The basis construction $E_2^a E_4^b E_6^c$ with $a+2b+3c=k$ for the space $\widetilde{M}_{2k}$ would imply that the number of partitions into parts at most 3, or at most three distinct part sizes, gives these coefficients. This sequence can be found in OEIS entry A001399.} in order to verify the $\Z$-basis; that is, it appears that the theoretically optimal Sturm bound is in fact true. We therefore leave this as a conjecture:

\begin{conjecture}
    Let $k \geq 1$ be an integer, and let $f \in \widetilde{M}_{2k}$. Then $f$ is determined uniquely by its first $\dim \widetilde{M}_{2k}$ Fourier coefficients. Furthermore, $f \pmod{m}$ for any $m \geq 2$ is uniquely determined by its first $\dim \widetilde{M}_{2k}$ coefficients modulo $m$.
\end{conjecture}

If the conjecture is true, the work of van Ittersum and Ringeling \cite{IttersumRingeling} in the case of depth 1 (i.e. quasimodular forms in which $E_2^2$ never appears) suggests that next steps towards its proof would be to developed the right technology to generalize the proof of the classical Sturm bound using the valence formula rather than the proof using an explicit triangular basis. However, a general formula describing a triangular $\Z$-basis for $\widetilde{M}_{2k}^\Z$ would be of incredible interest as well.

\subsection{Possible Next Directions}

In this paper, we have used no special analytic features of the transformation law for quasimodular forms. Indeed, the core assumptions of the method are almost entirely arithmetical. The technique developed in this paper yields the following general principle: 

\begin{principle}
    Given a family $\mathcal F^*$ of $q$-series which is congruent modulo infinitely many primes $p$ to another family of $q$-series $\mathcal F$ possessing a Sturm bound, and where coefficients in the $q$-series expansions for $\mathcal F^*$ can be reasonably bounded, one can lift the Sturm bound for $\mathcal F$ into a Sturm bound for $\mathcal F^*$.
\end{principle}

It is quite natural to seek extensions of this general principle. One generalization, of course, would be to prove results for quasimodular forms with level structure; this will be pursued by the author and collaborators in upcoming work. Other natural extensions may include Hecke triangle groups \cite{Henry} or other quasi-objects in modular form theory \cite{KanekoZagier}. Another of the first and most natural extensions of the principle in a nontrivial way is to the case of mixed weight quasimodular forms, i.e. elements of the algebra $\bigoplus_{k \geq 0} \widetilde{M}_{2k}$ which do not belong to any graded component. Many applications of quasimodular forms naturally produce such objects. For brevity, we discuss one example that arises in many of these contexts; the $q$-generalizations of multiple zeta values, which we simply call $q$-multiple zeta values. The classical starting point for this study MacMahon's work on generalized divisor sums \cite{MacMahon}. These sums are defined by the $q$-series
\begin{align*}
    \mathcal U_a(q) := \sum_{0 < m_1 < m_2 < \dots < m_a} \dfrac{q^{m_1 + m_2 + \dots + B}}{\lp 1 - q^{m_1} \rp^2 \lp 1 - q^{m_2} \rp^2 \cdots \lp 1 - q^{m_a} \rp^2}.
\end{align*}
It was conjectured by MacMahon and proven by Andrews and Rose \cite{AndrewsRose} that the $\mathcal U_a$ are mixed weight quasimodular forms. In fact, MacMahon's functions belong to a much wider universe of $q$-multiple zeta values, which are natural $q$-generalizations of the multiple zeta values
\begin{align*}
    \zeta\lp k_1, \dots, k_a \rp := \sum_{0 < n_1 < n_2 < \dots < n_a} \dfrac{1}{n_1^{k_1} n_2^{k_2} \cdots n_a^{k_a}}.
\end{align*}
These real numbers have appeared frequently across mathematics and physics; their modern study was initiated by Hoffman \cite{Hoffman}, and the literature on relations between these values is extensive. These $q$-multiple zeta values take the general shape
\begin{align*}
    \mathcal{U}_{a}\lp P_1, \dots, P_a; k_1, \dots, k_a \rp := \sum_{0 < m_1 < m_2 < \dots < m_a} \dfrac{P_1(q^{m_1}) \cdots P_a(q^{m_a})}{\lp 1 - q^{m_1} \rp^{k_1} \cdots \lp 1 - q^{m_a} \rp^{k_a}},
\end{align*}
where typically the $P_j$ are taken to be polynomials without constant term. The constants $k_j$ serve a role similar to weights for quasimodular forms; many analogies exist between the objects. The vector space of $q$-multiple zeta values is in fact an algebra, and it is known \cite{CraigIttersumOno} that these forms, when hit with natural actions of the symmetric group on weight-like components $k_j$, $\Q$-linearly generate the entire algebra of quasimodular forms. Already in this direction, work has been done to extend to higher levels \cite{Craig,Rose}, and there is much interest in congruences emerging from partition-theoretic questions coming from higher level variations of these $q$-multiple zeta values \cite{AmdeberhanAndrewsTauraso,AmdeberhanOnoSingh,SellersTauraso}.

It is therefore natural to ask whether the methods of this paper can be used to derive Sturm bounds for the spaces of mixed weight quasimodular forms. Following the Principle, this would follow from a Sturm bound for the space of mixed weight modular forms. As the author is not aware of any such result in the literature, we leave this as an open problem:

\begin{problem}
    Formulate a Sturm bound modulo primes for spaces of mixed weight modular forms.
\end{problem}

Now, motivated further by the work of Bachmann and K\"{u}hn \cite{BachmannKuhn}, we suggest that perhaps these techniques could extend even further in the universe of $q$-multiple zeta values. In particular, their work not only connects the $q$-multiple zeta values above to mixed-weight quasimodular forms; their work provides a graded algebra of $q$-multiple zeta values which are much larger than the algebra of of quasimodular forms. Since these objects have not been studied in great detail from the perspective of arithmetic $q$-series, but have been studied from the point of view of dimensionality conjectures \cite{BachmannKuhnDimension}, we propose the following course of study:

\begin{problem}
    Build an arithmetical theory or Sturm bound around the spaces of $q$-multiple zeta values.
\end{problem}

It would also be interesting to ask whether analogs exist for the Sturm bound in classical spaces of multiple zeta values, which have proper grading structures but which are not $q$-series.

\subsection{Comments on other domains for exploration}

A natural potential candidate for new Sturm bounds would be in the study of quasi-Jacobi forms \cite{IttersumOberdieckPixton}, which play a major role in the study of the elliptic genus of smooth manifolds. A stronger understanding of their Fourier coefficients would inform many aspects of the geometry embedded in these coefficients.

Another potential testing ground for extensions of this methods have recently emerged from the work of Bringmann, Pandey, and van Ittersum on their false and partial Eisenstein series \cite{BringmannPandeyIttersum}. These form natural graded algebras of $q$-series, generated by forms with (essentially) integer coefficients, they contain quasimodular forms as a subalgebra, and have $q$-coefficients that arise from formulas resembling some theories of quasimodular forms, and therefore are candidates for the kinds of arithmetic relations required for our method to apply. These $q$-series are also connected to unimodal sequences, which makes them natural candidates for the kinds of arithmetic which arise from modular-adjacent objects in this theory. We leave such questions for future work.

A larger and more difficult question would be whether such methods can be applied to infinite-dimensional spaces. Such an attempt would by necessity require much more complicated elements of operator theory in place of the classical linear algebra used in this work. Furthermore, one would need to be more clear about what is meant by a Sturm bound; in an infinite-dimensional context, the apparent answer is to ``improve on the $q$-expansion principle" so that one can determine a form $f$ uniquely in finite time based  on its $q$-expansion (and possibly other attached data). These are interesting questions which ought to be explored in these areas, and we will not speculate further.

\section*{Appendix: Code for $\Z$-basis}

The following code \cite{Code} can be used in SageMath to compute a set of quasimodular forms of weight $2k \geq 4$ that create a $\Z$-basis for $\widetilde{M}_{2k}^\Z$. Note that this algorithm will not in general produce anything ``triangular" in the usual sense. In particular, this uses Hermite normal form, where the output will be in a sense ``lower triangular" for us; instead of $q^i + O(q^{i+1})$, the basis will have form $a_0 + a_1 q^1 + \dots + a_i q^i + O(q^d)$ for $d$ the dimension in the underlying space. This code was written with the partial assistance of OpenAI’s ChatGPT (GPT-5 mini). The code performs the following operations:
\begin{enumerate}
    \item Enumerates all products of Eisenstein series $E_2^a E_4^b E_6^c$ contained in the space $\widetilde{M}_{2k}$;
    \item Computes Fourier expansions of each term up to $d = \dim \widetilde{M}_{2k}$ coefficients;
    \item Compile a matrix $M$ in which each column enumerates a truncated Fourier expansions;
    \item Compute the Hermite normal form of $M$, tracking at each stage all column operations performed. If the calculation fails, go back to (2) and include one extra term in each Fourier expansion;
    \item Output the particular quasimodular forms associated with the output columns one a valid Hermite normal form is achieved.
\end{enumerate}

\end{document}